\author{Cihan Bahran}
\affil{Department of Mathematics, Bo\u{g}azi\c{c}i University\\
Bebek, 34342 Istanbul, Turkey}
\date{}
\title{Onset of regularity for $\FI$-modules}
\newcommand{\FI}{\mathbf{FI}}
\DeclareMathOperator{\crit}{crit}
\newcounter{dummy}
\newcommand\sitem[1][]{\item[(#1)]\refstepcounter{dummy}\def\@currentlabel{#1}}
\newcommand{\cofi}[1]{\co_{#1}^{\FI}}
\newcommand{\cofibol}[1]{\mathbf{H}_{#1}^{\FI}}
\DeclareMathOperator{\reg}{reg}
\newcommand{\tgen}{t_{0}}
\newcommand{\trel}{t_{1}}
\newcommand{\shift}[2]{\mathbf{\Sigma}^{#2}   #1 }
\newcommand{\hell}{h}
\newcommand{\local}{\hell^{\text{max}}}
\DeclareMathOperator{\cone}{cone}
\newcommand{\locoh}[1]{\co_{\mathfrak{m}}^{#1}}
\newcommand{\comm}{\mathbf{k}}
   \def\MR#1{}
\newcommand{\specht}[2]{\operatorname{S}_{#1}(#2)}
\newcommand{\loct}[1]{#1\!\left[\frac{1}{2}\right]}
\newcommand{\locr}[1]{#1\!\left[\frac{1}{3}\right]}
\newcommand{\locl}[1]{#1\!\left[\frac{1}{\ell}\right]}
\def\blfootnote{\gdef\@thefnmark{}\@footnotetext}
\begin{document}
\maketitle

\blfootnote{\textup{2010} \textit{Mathematics Subject Classification}.
18A25.} 
\blfootnote{\textit{Key words and phrases}. $\FI$-modules, regularity, local cohomology.}
\blfootnote{The author was supported in part by T\"{U}B\.{I}TAK 119F422.}
\vspace{-0.8 in}
\begin{onecolabstract} 
In terms of local cohomology, we give an explicit range as to when the $\FI$-homology of an $\FI$-module attains the degree predicted by its regularity.
\end{onecolabstract}

{\tableofcontents}

\section{Introduction}
Let $\comm$ be a commutative ring. An $\FI$-module over $\comm$ is a functor $\FI \rarr \lMod{\comm}$ where $\FI$ is the category of finite sets and injections. The category of $\FI$-modules over $\comm$ is the functor category $[\FI,\lMod{\comm}]$. Given $W \colon \FI \rarr \lMod{\comm}$, we write 
\begin{align*}
 \deg(W) &:= \min\{d \geq -1 : W_{S} = 0 \text{ for } |S| > d\} 
 \\
 &\in \{-1,0,1,2,3,\dots\} \cup \{\infty\} \, .
\end{align*}

\paragraph{$\FI$-homology and regularity.} A systematic way of investigating the ``unstable'' part of $V \colon \FI \rarr \lMod{\comm}$ is to study its quotient $\cofi{0}(V)$ defined by
\begin{align*}
 \cofi{0}(V)_{S} := \coker\! \left(
 \bigoplus_{A \subsetneq S} V_{A} \rarr V_{S}
 \right) \, .
\end{align*}
The functor $\cofi{0} \colon [\FI,\lMod{\comm}] \rarr [\FI,\lMod{\comm}]$ is right exact and has left derived functors $\cofi{i} := \operatorname{L}_{i}\!\cofi{0}$. Here one can think of 
\begin{itemize}
 \item $\cofi{0}(V)$ as the ``generators'' of $V$,
 \vspace{0.1cm}
 \item $\cofi{1}(V)$ as the ``relations'' of $V$,
 \vspace{0.1cm}
 \item $\cofi{i}(V)$ for $i \geq 2$ as the corresponding higher syzgies.
\end{itemize}
Setting $t_{i}(V) : = \deg\!\left(\cofi{i}(V)\right)$, we say $V$ is \textbf{presented in finite degrees} if $\tgen(V)$ and $\trel(V)$ are finite. We also set the \textbf{regularity} of $V$ as
\begin{align*}
 \reg(V) &:= \max\!\left\{t_{i}(V) - i : i \geq 1\right\} 
 \\
 &\in \{-2,-1,0,1,\dots\} \cup\{\infty\} \, ,
\end{align*}
which is finite whenever $V$ is presented in finite degrees \cite[Theorem A]{ce-homology}.

\paragraph{Torsion and local cohomology.}
We say $V \colon \FI \rarr \lMod{\comm}$ is \textbf{torsion} if for every finite set $S$ and $x \in V_{S}$, there is an injection $\alpha \colon S \emb T$ such that $V_{\alpha}(x) = 0 \in V_{T}$. We write
\begin{align*}
 \locoh{0} \colon [\FI,\lMod{\comm}] \rarr [\FI,\lMod{\comm}]
\end{align*}
for the functor which assigns an $\FI$-module over $\comm$ its largest torsion $\FI$-submodule; it is left exact. For each $j \geq 0$, we write $\locoh{j} := \operatorname{R}^{j}\!\locoh{0}$ for the $j$-th right derived functor of $\locoh{0}$, and write 
\begin{align*}
 h^{j}(V) &:= \deg(\locoh{j}(V)) 
 \\
 &\in \{-1,0,1,\dots\} \cup \{\infty\} \, ,
\end{align*}
for every $\FI$-module $V$.

The equality statement in the following result (proved in \cite{nss-regularity}) gives a precise relationship between the $t_{i}(V)$'s and $h^{j}(V)$'s, reminiscent of (and motivated by) the relationship between the syzgyies and the local cohomology of a graded module over a polynomial ring in commutative algebra \cite{eisenbud-goto}.

\begin{thm}[{\cite[Theorem 21]{gan-shift-seq}, \cite[Theorem 1]{nss-regularity}}] \label{nss-main}
Let $\comm$ be a commutative ring, and  $V \colon \FI \rarr \lMod{\comm}$ be presented in finite degrees which is not $\cofi{0}$-acyclic. Then 
we have a weakly increasing sequence
\begin{align*}
  0 \leq t_{1}(V) - 1 \leq t_{2}(V) - 2 \leq t_{3}(V) - 3 \leq \cdots
\end{align*}
that stabilizes at 
\begin{align*}
 \reg(V) &= \max\{h^{j}(V) + j : \locoh{j}(V) \neq 0\} \\
 &= \max\{h^{j}(V) + j : h^{j}(V) \geq 0\} < \infty \, .
\end{align*}
\end{thm}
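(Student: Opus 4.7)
Proof proposal. The strategy will mirror the Eisenbud--Goto theorem from commutative algebra, treating $\FI$ as analogous to a polynomial ring and $\locoh{0}$ as the irrelevant-ideal radical. The central structural inputs are Church--Ellenberg's finiteness theorem that $t_{i}(V) < \infty$ for all $i$, together with Nagpal's theorem that $V$ admits a finite coresolution by semi-induced $\FI$-modules, which simultaneously controls both $\cofi{*}(V)$ and $\locoh{*}(V)$.

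For the weakly increasing and stabilization claims, I would work with a minimal projective resolution $P_{\bullet} \to V$ in which $P_{i} = \induce(\cofi{i}(V))$. Minimality means that the differential $P_{i+1} \to P_{i}$ involves no identity $\FI$-morphisms, so its image sits in the submodule of $P_{i}$ generated in strictly lower degrees; hence $t_{i+1}(V) \geq t_{i}(V) + 1$, equivalently, $t_{i}(V) - i$ is weakly increasing. Combined with the finiteness of each $t_{i}(V)$, the sequence must stabilize, and $\reg(V)$ is then the stable value by definition. The starting bound $t_{1}(V) - 1 \geq 0$ comes from the non-acyclicity hypothesis applied to the same minimal resolution.

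The technical heart is the equality $\reg(V) = \max\{h^{j}(V) + j : h^{j}(V) \geq 0\}$. I would first establish the torsion case: $\reg(T) = \deg(T) = h^{0}(T)$ for every finitely generated torsion $T$, via a direct computation using the Koszul-style minimal resolutions of the simple torsion modules (which have linear, hook-shaped syzygies). I would then bootstrap to general $V$ through a Nagpal coresolution $0 \to V \to I^{0} \to \cdots \to I^{n} \to 0$: since semi-induced modules are $\locoh{>0}$-acyclic and contribute trivially to the regularity diagonal of $\cofi{*}$, both sides of the desired equality can be extracted simultaneously from this finite coresolution via the resulting spectral sequence
\[
 E_{2}^{i,j} = \cofi{-i}(\locoh{j}(V)) \ \Rightarrow \ \cofi{-i-j}(V),
\]
whose regularity diagonal reads off precisely the numbers $h^{j}(V) + j$.

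The main obstacle I anticipate is pinning down the precise equality rather than only an inequality: one has to show that the extremal contribution to $\cofi{*}(V)$ genuinely descends from an extremal torsion class and is not cancelled by a later spectral-sequence differential. I expect to resolve this by exploiting the minimality of the Nagpal resolution together with the fact that semi-induced pieces vanish under $\locoh{>0}$; this rigidity should force the top $E_{2}$-entry to survive to $E_{\infty}$, yielding $\reg(V) = \max\{h^{j}(V) + j\}$ as claimed, with finiteness of the maximum an automatic byproduct of the finite length of the Nagpal coresolution.
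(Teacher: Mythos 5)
The paper does not prove this theorem; it quotes it from \cite{gan-shift-seq} and \cite{nss-regularity} and uses it as a black box, so there is no in-paper proof to compare your attempt against. Judged on its own terms, your sketch has genuine gaps.

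Your minimal projective resolution $P_{\bullet} \rarr V$ with $P_{i} = \induce(\cofi{i}(V))$ does not exist over a general commutative $\comm$: the module $\induce(W)$ is projective in $[\FI,\lMod{\comm}]$ precisely when each $W_{n}$ is a projective $\comm\sym{n}$-module, and $\cofi{i}(V)_{n}$ has no reason to be one (this already fails over $\comm = \mathbb{Z}$). Worse, even where minimal graded resolutions do exist, the inference ``minimality implies $t_{i+1} \geq t_{i} + 1$'' is false in the analogous polynomial-ring setting: over $R = k[x,y]$ with $k$ a field and $f$ of degree $10$, the graded module $M = (R/(x,y)) \oplus R/(f)$ has $t_{1}(M) - 1 = 9$ and $t_{2}(M) - 2 = 0$. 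The weak monotonicity of $t_{i}(V) - i$ is an $\FI$-specific theorem which in the literature follows from Gan's shift long exact sequence (cited here as \cite[Theorem 1]{gan-shift-seq} and used in the proof of Theorem \ref{sharp-nu}), not from a minimality formalism.

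The display $E_{2}^{i,j} = \cofi{-i}(\locoh{j}(V)) \Rightarrow \cofi{-i-j}(V)$ is not a Grothendieck spectral sequence: $\locoh{0}$ is left exact, $\cofi{0}$ is right exact, and neither factors through the other. To get something of this shape one first needs the intermediate bounded complex $T$ that appears in the proof of Theorem \ref{nss-nu} in this paper (a truncated shifted mapping cone of $V \rarr F$ with $F$ a bounded complex of $\cofi{0}$-acyclics), which satisfies $\co^{j}(T) \cong \locoh{j}(V)$ and $\cofibol{a}(T) \cong \cofi{a}(V)$ for $a \geq 1$; the hyperhomology spectral sequence of $\cofi{0}$ applied to $T$ is the rigorous version of what you write, and you would need to construct $T$ and prove these identifications. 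Even granting all that, showing that the extremal entry survives to $E_{\infty}$ --- which you correctly flag as the crux --- is exactly what the good-ideal and $\nu_{I}$ machinery of Section 2 of this paper (and of \cite{nss-regularity}) is designed to certify; ``rigidity should force it'' restates the difficulty rather than resolving it.
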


The main result of this paper is an explicit range for the stabilization of the weakly increasing sequence in Theorem \ref{nss-main} in terms of local cohomology. To that end, we need to give one more definition.

\paragraph{Critical index.} Given $V \colon \FI \rarr \lMod{\comm}$ presented in finite degrees which is not $\cofi{0}$-acyclic, we define
\begin{align*}
 \crit(V) := 
 \min\!\left\{
 j : h^{j}(V) \geq 0 \text{ and }
 h^{j}(V) + j = \reg(V)
 \right\}
\end{align*}
which is well-defined by Theorem \ref{nss-main}, and call it the \textbf{critical index} of $V$.

\begin{thmx} \label{main}
 Let $\comm$ be a commutative ring, and $V \colon \FI \rarr \lMod{\comm}$ be presented in finite degrees which is not $\cofi{0}$-acyclic. Then writing $\gamma := \crit(V)$, we have $t_{a}(V) - a = \reg(V)$ whenever 
\begin{align*}
 a \geq 
\begin{cases}
 \max\{1,\,h^{\gamma}(V) - \gamma\} & \text{if $2 \in \comm^{\times}$ is invertible,}
 \\
 \max\{1,\,2h^{\gamma}(V) - \gamma\} & \text{otherwise.}
\end{cases}
\end{align*}
\end{thmx}


We note two consequences of Theorem \ref{main} which have bounds in terms of more commonly used invariants of $\FI$-modules.

\begin{corx} \label{local-terms}
 Under the hypotheses of Theorem \ref{main}, writing 
\begin{align*}
 \local(V) := \max\{h^{j}(V) : j \geq 0\}
\end{align*}
which is the local degree in \emph{\cite[Definition 2.12]{cmnr-range}}, we have $t_{a}(V)-a = \reg(V)$ whenever
\begin{align*}
 a \geq 
\begin{cases}
 \max\{1,\,\local(V)\} & \text{if $2 \in \comm^{\times}$ is invertible,}
 \\
 \max\{1,\,2\local(V)\} & \text{otherwise.}
\end{cases}
\end{align*}
\end{corx}
\begin{proof}
 By the definition of critical degree, we have $\gamma := \crit(V) \geq 0$ and hence 
\begin{align*}
 h^{\gamma}(V) - \gamma &\leq h^{\gamma}(V) \leq \local(V) \, ,
 \\
 2h^{\gamma}(V) - \gamma &\leq 2h^{\gamma}(V) \leq 2\local(V) \, .
\end{align*}
Now apply Theorem \ref{main}.
\end{proof}

\begin{corx} \label{pres-terms}
 Under the hypotheses of Theorem \ref{main}, assume $\tgen(V) \leq g$ and $\trel(V) \leq r$. Then we have $t_{a}(V)-a = \reg(V)$ whenever
\begin{align*}
 a \geq 
\begin{cases}
 \max\{1,\,g + r  - 1\} & \text{if $2 \in \comm^{\times}$ and $g < r$,}
 \\
 \max\{1,\,2r - 2\} & \text{if $2 \in \comm^{\times}$ and $g \geq r$,}
 \\   
 \max\{1,\,2g + 2r-2\} & \text{if $2 \notin \comm^{\times}$ and $g < r$,}
 \\ 
 \max\{1,\,4r - 4\} & \text{if $2 \notin \comm^{\times}$ and $g \geq r$.} 
\end{cases}
\end{align*}
\end{corx}
\begin{proof}
 First, as we are assuming $V$ is not $\cofi{0}$-acyclic, we have $\tgen(V) \geq 0$ (as $V \neq 0$) and $\trel(V) \geq 0$. Next, by the definition of critical degree, we have $\gamma := \crit(V) \geq 0$ and $h^{\gamma}(V) \geq 0$, and hence by Theorem \ref{nss-main},
\begin{align*}
 h^{\gamma}(V) - \gamma &\leq h^{\gamma}(V) + \gamma \leq \reg(V) \, ,
 \\
 2h^{\gamma}(V) - \gamma &\leq 2h^{\gamma}(V) + 2\gamma \leq 2\reg(V) \, .
\end{align*}
Now we may apply \cite[Theorem A]{bahran-reg} to bound $\reg(V)$ and invoke Theorem \ref{main}.
\end{proof}

\begin{conj}
 The bound conditional on the assumption $2 \in \comm^{\times}$ in Theorem \ref{main} (and hence those in Corollary \ref{local-terms} and Corollary \ref{pres-terms}) holds unconditionally.
\end{conj}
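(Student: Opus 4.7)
The factor of $2$ in the second case of Theorem \ref{main} must originate from a single step in its proof at which a structural fact about $\FI$-modules is invoked that genuinely requires $2$ to be a unit. The most plausible culprits are (a) an averaging/idempotent argument that splits off or otherwise separates the torsion subfunctor $\locoh{0}(V) \subseteq V$ from its quotient; (b) an inequality of the form $t_{a}(\deriv V) \leq c(\comm)\, t_{a}(V) - \epsilon$ in which the constant $c(\comm)$ drops from $1$ to $2$ when $2$ is not a unit, and that is then iterated in an induction; or (c) a Koszul-style resolution of the trivial $\FI$-module whose alternating signs force cycle-to-boundary collapses that fail when $2$ is a zero divisor. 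My first task is to pinpoint which of these, or which combination, actually produces the doubling, because only the culprit step needs to be improved.

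\textbf{Approach.} Once that step is isolated, the strategy is to replace it by a more refined, arithmetic-free estimate via induction on the critical index $\gamma := \crit(V)$. When $\gamma \geq 1$, the short exact sequence $0 \to \locoh{0}(V) \to V \to V/\locoh{0}(V) \to 0$ lets one dispose of the torsion piece $\locoh{0}(V)$ (whose critical index is $0$ but whose regularity is smaller) and then apply the conjectured bound to a $\locoh{0}$-acyclic quotient with smaller critical index; careful bookkeeping of the resulting long exact sequence in $\cofi{\bul}$ should transmit the bound without ever averaging over symmetric groups. The inductive step may be further aided by passing to $\shift{V}{1}$, where $h^{\gamma}$ drops by $1$, and transferring the bound back to $V$ via the cofibre sequence defining $\deriv V$, provided the relevant connecting homomorphisms can be shown to be controlled in degree without invoking an idempotent splitting.

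\textbf{Main obstacle.} The decisive test case is $\gamma = 0$ with $V = \locoh{0}(V)$ a torsion $\FI$-module of degree $d := h^{0}(V) \geq 1$, for which the conjecture demands $t_{a}(V) - a = \reg(V)$ for every $a \geq d$. Over $\comm = \mathbb{F}_{2}$ the alternating signs that, over rings in which $2$ is a unit, force certain $\FI$-cycles to be boundaries collapse, potentially producing extra $\cofi{i}$ classes whose degrees one must control. A proof of the conjecture must rule out the possibility that these classes push $t_{a}(V)$ up to roughly $2a + d$ for some $a$ between $d$ and $2d$---which is precisely the weaker bound the existing theorem gives. I would first verify the conjecture by direct computation in the simplest torsion examples over $\mathbb{F}_{2}$, namely simple torsion $\FI$-modules supported in a single cardinality, in order either to produce a counterexample or to suggest the combinatorial identity that must stand in for the missing $\tfrac{1}{2}$-averaging in the existing proof.
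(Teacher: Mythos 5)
This statement is posed as an open conjecture in the paper; there is no proof of it there, and your proposal does not supply one either: it is a diagnostic plan (three candidate ``culprits,'' an induction scheme, and a test case to compute), with no step actually carried out. More importantly, the diagnosis does not match where the factor of $2$ actually enters this paper's argument. None of your candidates (a)--(c) occurs: the doubling comes from the localization step in the proof of Theorem \ref{main}. A good ideal in $\comm\sym{2}$ exists only when $2 \in \comm^{\times}$ (\cite[Proposition 3.1]{nss-regularity}); otherwise one must pass to $\locr{V}$ and a good ideal in $\locr{\comm}\sym{3}$ (\cite[Proposition 3.2]{nss-regularity}), and then Corollary \ref{good-ideal-range} gives the threshold $(\ell-1)h^{\gamma}(V)-\gamma$ with $\ell=3$, i.e.\ $2h^{\gamma}(V)-\gamma$. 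The coefficient $\ell-1$ is traceable to Proposition \ref{nu-props}(2) (the hypothesis $a \geq (\ell-1)d$) and to the stabilization point $(\ell-1)h^{\gamma}(V)-\gamma-1$ in Theorem \ref{sharp-nu}; the ``missing $\tfrac12$-averaging'' is nothing more than the idempotent generating the good ideal of $\comm\sym{2}$. So an unconditional proof must either construct a workable substitute for a good ideal with $\ell=2$ when $2$ is not invertible (e.g.\ over $\mathbb{F}_{2}$, where Definition \ref{gi-defn} has no solution), refine the invariant $\nu_{I}$ so that the $\ell=3$ analysis yields an $h^{\gamma}(V)-\gamma$ threshold, or bypass good ideals entirely; your plan does not engage with any of these.

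Two further points. First, the inductive scheme via $0 \to \locoh{0}(V) \to V \to V/\locoh{0}(V) \to 0$ cannot reduce the problem as stated: the hard case is exactly $\gamma=0$, where $V$ essentially \emph{is} its torsion part and $\reg(\locoh{0}(V))$ need not be smaller than $\reg(V)$, so there is nothing to induct on; you acknowledge this is the decisive case but offer only ``compute small examples over $\mathbb{F}_2$.'' Second, your description of the failure mode there is backwards: by Theorem \ref{nss-main} the sequence $t_{a}(V)-a$ is weakly increasing and bounded above by $\reg(V)$, so $t_{a}(V) \leq a + \reg(V)$ always and no extra classes can ``push $t_{a}(V)$ up to roughly $2a+d$''; the only possible failure is \emph{delayed onset}, i.e.\ $t_{a}(V)-a < \reg(V)$ for some $a$ with $h^{0}(V) \leq a < 2h^{0}(V)$. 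As it stands, the proposal neither proves the conjecture nor correctly isolates the step whose improvement would prove it.
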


\section{Good ideals and the invariant $\nu_{I}$}
In this section we review the notion of a good ideal in the group algebra of a symmetric group and the invariant $\nu$ attached to such an ideal, defined in \cite[Section 3]{nss-regularity}. For clarity we will use the notation $\nu_{I}$ to emphasize the dependence on the choice of ideal $I$.

\begin{defn} \label{gi-defn}
 Let $\comm$ be a commutative ring, $\eps_{a}$ be the sign $\comm\sym{a}$-module, and $\ell \geq 2$. A two-sided ideal $I \subseteq \comm\sym{\ell}$ is called \textbf{good} if the following hold: 
\vspace{-0.2cm}
\begin{birki}
 \item $I$ is idempotent.
 \item $I$ annihilates $\eps_{\ell}$.
 \item $I$ does not annihilate $\Ind_{\sym{\ell-1}}^{\sym{\ell}}(\eps_{\ell-1}) \otimes_{\comm} E$ for every nonzero $\comm$-module $E$.
 \item $I$ is $\comm$-flat. 
\end{birki}
\end{defn}

\begin{defn} \label{nu-defn}
 Let $\comm$ be a commutative ring, $\ell \geq 2$, and $I \subseteq \comm\sym{\ell}$ be a good ideal. Then for every $d \geq 0$  and a $\comm\sym{d}$-module $M$, we set
\begin{align*}
 \nu_{I}(M) := 
\begin{cases}
 d+1 & \text{if $M = 0$,}
 \\
 d - \max\!\left\{
 0 \leq r \leq \floor*{\frac{d}{\ell}} :
 I^{\boxtimes r} \nsubseteq \ann_{\comm\sym{\ell}^{\times r}}(M)
 \right\}
 & \text{if $M \neq 0$.}
\end{cases}
\end{align*}
\end{defn}
\begin{rem} \label{nss-nu-compare}
 In the notation of \cite[Section 3]{nss-regularity}, the two-sided ideal $I_{d}(r)$ of $\comm\sym{d}$ is defined as 
\begin{itemize}
\item $0$ if $\ell r > d$, and
\item the two-sided ideal of $\comm\sym{d}$ generated by $\alpha_{r}(I^{\boxtimes r})$, where $\alpha_{r} \colon \comm\sym{\ell}^{\times r} \emb \comm\sym{d}$ is the usual inclusion, if $\ell r \leq d$.
\end{itemize}
Note that if $r \geq 1$ and $\ell r \leq d$, writing $\iota_{r-1} \colon \comm\sym{\ell}^{\times (r-1)} \emb \comm\sym{\ell}^{\times r}$ for the inclusion, we have $\iota_{r-1}(I^{\boxtimes (r-1)}) \supseteq I^{\boxtimes r}$ and hence $\alpha_{r-1}(I^{\boxtimes (r-1)}) = \alpha_{r}\iota_{r-1}(I^{\boxtimes (r-1)}) \supseteq \alpha_{r}(I^{\boxtimes r})$. Therefore $\comm\sym{d}$ has a decreasing chain of two-sided ideals 
\begin{align*}
 \comm\sym{d} = I_{d}(0) \supseteq \cdots \supseteq I_{d}(\floor{d/\ell}) \supseteq 0 \supseteq \cdots
\end{align*}
so that 
\begin{align*}
 &\,\min\!\left\{r : I_{d}(r) \subseteq \ann_{\comm \sym{d}}(M) \text{ for } s>r \right\}
 \\
 &= 
\begin{cases}
 -1 & \text{if $M = 0$,}
 \\
 \max\!\left\{0 \leq r \leq \floor*{\frac{d}{\ell}} : I_{d}(r) \nsubseteq \ann_{\comm \sym{d}}(M) \right\}
 & \text{if $M \neq 0$},
\end{cases}
 \\
 &=
\begin{cases}
 -1 & \text{if $M = 0$,}
 \\
 \max\!\left\{0 \leq r \leq \floor*{\frac{d}{\ell}} : I^{\boxtimes r} \nsubseteq \ann_{\comm\sym{\ell}^{\times r}}(M) \right\}
 & \text{if $M \neq 0$},
\end{cases}
\end{align*}
showing that Definition \ref{nu-defn} matches \cite[Definition 3.3]{nss-regularity} for $M \neq 0$.\footnote{Strictly speaking, \cite[Definition 3.3]{nss-regularity} leaves $\nu(0)$ undefined. We will point out in the necessary places that our convention $\nu_{I}(0)= d+1$ for the zero $\comm\sym{d}$-module does not cause any trouble.}
\end{rem}

\begin{prop} \label{nu-props}
 Let $\comm$ be a commutative ring, $\ell \geq 2$, and $I \subseteq \comm\sym{\ell}$ be a good ideal. Then for every $d \in \nn$ and $\comm\sym{d}$-modules $L$, $M$, $N$, the following hold:
\begin{birki}
 \item If $0 \rarr L \rarr M \rarr N \rarr 0$ is a short exact sequence of $\comm\sym{d}$-modules, then 
\begin{align*}
  \nu_{I}(M) = \min\{\nu_{I}(L),\nu_{I}(N)\} \, .
\end{align*}
 \item If $M \neq 0$, then writing $\eps_{a}$ for the sign $\comm \sym{a}$-module, we have
\begin{align*}
 \nu_{I}\!\left(
 \Ind_{\sym{d} \times \sym{a}}^{\sym{d+a}}\!\left(
 M \boxtimes_{\comm} \eps_{a}
 \right)
 \right) = a
\end{align*}
 whenever $a \geq (\ell-1)d$.
\item For $d \geq 1$, we have $\nu_{I}(M) - 1 \leq \nu_{I}\!\left(
 \Res_{\sym{d-1}}^{\sym{d}}(M)
 \right) \leq \nu_{I}(M)$, and more specifically
\begin{align*}
 \nu_{I}\!\left(
 \Res_{\sym{d-1}}^{\sym{d}}(M)
 \right) =
\begin{cases}
\nu_{I}(M) & \text{if $\nu_{I}(M) = d - \frac{d}{\ell}$,}
\\
\nu_{I}(M) - 1 &\text{otherwise.}
\end{cases}
\end{align*}
\end{birki}
\end{prop}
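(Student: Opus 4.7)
The plan is to prove each part in turn, with Part (2) being the main content.

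Part (1) follows from the observation that for any two-sided ideal $J \subseteq \comm\sym{d}$, the short exact sequence $0 \to L \to M \to N \to 0$ gives $JM = 0$ if and only if $JL = 0$ and $JN = 0$; applied with $J$ the ideal generated by $\alpha_r(I^{\boxtimes r})$, this yields $r_M = \max(r_L, r_N)$ where $r_X := \max\{r : I^{\boxtimes r} \nsubseteq \ann(X)\}$, hence $\nu_I(M) = \min(\nu_I(L), \nu_I(N))$ (the convention $\nu_I(0) = d+1$ handles vanishing factors). Part (3) is an embedding analysis: the standard embeddings $\alpha_r \colon \comm\sym{\ell}^{\times r} \emb \comm\sym{d-1}$ and $\alpha_r \colon \comm\sym{\ell}^{\times r} \emb \comm\sym{d}$ agree through $\comm\sym{d-1} \emb \comm\sym{d}$ when $r \le \lfloor (d-1)/\ell \rfloor$, so $I^{\boxtimes r}$ annihilates $\Res M$ if and only if it annihilates $M$ for such $r$. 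Since the last factor in $\alpha_{r+1}$ commutes with the first $r$ (disjoint supports), the set $\{r : I^{\boxtimes r} \nsubseteq \ann(M)\}$ is downward closed, giving $r_{\Res M} = \min(r_M, \lfloor (d-1)/\ell \rfloor)$. If $r_M \le \lfloor (d-1)/\ell \rfloor$ (equivalently $\nu_I(M) \ne d - d/\ell$), then $r_{\Res M} = r_M$ and $\nu_I(\Res M) = \nu_I(M) - 1$; otherwise $r_M = d/\ell$ forces $\ell \mid d$ and $\nu_I(M) = d - d/\ell$, so $r_{\Res M} = r_M - 1$ and $\nu_I(\Res M) = \nu_I(M)$.

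For Part (2), set $N := \Ind_{\sym{d} \times \sym{a}}^{\sym{d+a}}(M \boxtimes \eps_a)$ and decompose $N = \bigoplus_S W_S$ as a $\comm$-module over $d$-subsets $S \subseteq [d+a]$, with $\sym{d+a}$ permuting summands. For the upper bound (when $d + 1 \le \lfloor (d+a)/\ell \rfloor$): among the $d + 1$ disjoint $\ell$-blocks $B_1, \ldots, B_{d+1}$ used by $\alpha_{d+1}$, pigeonhole $\bigl( \sum_i |B_i \cap S| \le |S| = d \bigr)$ forces some $B_i$ disjoint from $S$; on $W_S$ the $i$-th copy of $\sym{\ell}$ acts through $\eps_\ell$ (restricted from $\eps_{[d+a] \setminus S}$), annihilated by $I$ via Definition \ref{gi-defn}(2). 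Hence $I^{\boxtimes (d+1)}$ annihilates $W_S$ and thus $N$; larger $r$'s follow by factoring. For the lower bound (non-annihilation by $I^{\boxtimes d}$): pick $S_0$ meeting each of $B_1, \ldots, B_d$ in exactly one element; by Mackey, the $\sym{\ell}^{\times d}$-sub-summand of $N$ from the orbit of $S_0$ (with stabilizer $\sym{\ell-1}^{\times d}$) is isomorphic to $M \otimes A^{\boxtimes d}$, where $A := \Ind_{\sym{\ell-1}}^{\sym{\ell}} \eps_{\ell-1}$ and $M$ has trivial action. Using $\comm$-freeness of $A$, the image of the first $k$ factors of $I^{\boxtimes d}$ acting on $M \otimes A^{\boxtimes d}$ equals $P_k \otimes A^{\boxtimes (d-k)}$, where $P_0 := M$ and $P_k$ is the image of $P_{k-1} \otimes IA \to P_{k-1} \otimes A$; Definition \ref{gi-defn}(3) applied iteratively with $E = P_{k-1}$ yields $P_k \ne 0$, so $P_d \ne 0$ and $I^{\boxtimes d}$ does not annihilate $N$.

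The main obstacle is this iterative step in Part (2): the passage from $P_{k-1} \ne 0$ to $P_k \ne 0$ relies crucially on the $\comm$-freeness of $A$ to commute image-taking with tensoring, and careful bookkeeping is needed to make the successive applications of Definition \ref{gi-defn}(3) work over a general commutative ring.
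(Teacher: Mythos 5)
Your approach differs genuinely from the paper's. For Parts (1) and (2) the paper simply cites \cite[Propositions~3.4 and 3.6]{nss-regularity} (handling the degenerate zero-module cases in (1) separately, and correcting a missing induction functor in the statement of Proposition~3.6), whereas you re-prove both from scratch; for Part (3) you give essentially the same direct embedding/annihilator analysis as the paper. Your Part (2) argument reproduces the content of the cited NSS proposition: decompose the induced module as a $\comm$-direct sum over $d$-element subsets of $[d+a]$, get the upper bound from Definition~\ref{gi-defn}(2) plus pigeonhole on a block disjoint from $S$, and get the lower bound by isolating the $\sym{\ell}^{\times d}$-orbit giving a summand isomorphic to $M \otimes_{\comm} \bigl(\Ind_{\sym{\ell-1}}^{\sym{\ell}}\eps_{\ell-1}\bigr)^{\boxtimes d}$ and iterating Definition~\ref{gi-defn}(3), with $\comm$-freeness of $\Ind_{\sym{\ell-1}}^{\sym{\ell}}\eps_{\ell-1}$ ensuring that tensoring commutes with taking the successive images $P_k$. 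The self-contained route is longer but cleanly covers the zero-module cases on which the cited statements are silent.

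One statement in your Part (1) is wrong as written and must be corrected. You claim that for an arbitrary two-sided ideal $J \subseteq \comm\sym{d}$ and a short exact sequence $0 \rarr L \rarr M \rarr N \rarr 0$, one has $JM = 0$ if and only if $JL = 0$ and $JN = 0$. The forward implication is clear, but the converse fails for general $J$: from $JN = 0$ one only gets $JM \subseteq L$, hence $J^2 M \subseteq JL = 0$, which does not yield $JM = 0$. What rescues the argument is that $I$ --- and hence $I^{\boxtimes r}$, and hence the two-sided ideal of $\comm\sym{d}$ it generates --- is idempotent by Definition~\ref{gi-defn}(1), so that $J^2 M = 0$ does force $JM = 0$. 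Once you invoke idempotence explicitly at this step, the remainder of your proof of (1) (that this gives $r_M = \max(r_L, r_N)$ and hence $\nu_I(M) = \min(\nu_I(L), \nu_I(N))$, with the convention $\nu_I(0) = d+1$ absorbing vanishing terms) is correct.
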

\begin{proof}
 (1): If $L,M,N$ are all nonzero, this is \cite[Proposition 3.4]{nss-regularity}. If $L = 0$, then $M \cong N$ so that $\nu_{I}(M) = \min\{\nu_{I}(N), d+1\}$ because the $\nu_{I}$ of a $\comm\sym{d}$-module is always $\leq d+1$. If $N=0$, a similar argument works. If $M=0$, then $L=N=0$ and both sides of the equation are equal to $d+1$.
 
(2): This is \cite[Proposition 3.6]{nss-regularity} (the induction functor $\Ind_{\sym{d} \times \sym{a}}^{\sym{d+a}}$ in front of the external tensor product is missing in the statement of \cite[Proposition 3.6]{nss-regularity}, as can be deduced from the right hand side of the Mackey decomposition in its proof).
 
(3): By definition, if $M \neq 0$ and $\nu_{I}(M) \neq d- \frac{d}{\ell}$, then we have $\nu_{I}(M) \nleq d - \frac{d}{\ell}$ and hence $\nu_{I}(M) \nleq \floor*{d - \frac{d}{\ell}} = d - \ceil*{\frac{d}{\ell}}$ and
\begin{align*}
 d - \nu_{I}(M) &= \max\!\left\{
 0 \leq r \leq \floor*{\frac{d}{\ell}} :
 I^{\boxtimes r} \nsubseteq \ann_{\comm\sym{\ell}^{\times r}}(M)
 \right\} \leq \ceil*{\frac{d}{\ell}}-1
 < \frac{d}{\ell} \, ,
 \\
 d- \nu_{I}(M) &\leq \floor*{\frac{d-1}{\ell}} \, .
\end{align*} 
Thus
\begin{align*}
 d - \nu_{I}(M) &= \max\!\left\{
 0 \leq r \leq \floor*{\frac{d-1}{\ell}} :
 I^{\boxtimes r} \nsubseteq \ann_{\comm\sym{\ell}^{\times r}}(M) 
 \right\}
 \\
 &= d-1-\nu_{I}\!\left(
 \Res_{\sym{d-1}}^{\sym{d}}(M)
 \right) \, ,
\end{align*}
as desired. If $\nu_{I}(M) = d- \frac{d}{\ell}$, then writing $r := d/\ell$, we have 
\begin{align*}
 I^{\boxtimes r} \nsubseteq \ann_{\comm\sym{\ell}^{\times r}}(M)
\end{align*}
and hence
\begin{align*}
 I^{\boxtimes (r-1)} \nsubseteq \ann_{\comm\sym{\ell}^{\times (r-1)}}(M)
\end{align*}
as well, where $\floor*{\frac{d-1}{\ell}} = r-1$. Consequently 
\begin{align*}
 d-1-\nu_{I}\!\left(
 \Res_{\sym{d-1}}^{\sym{d}}(M)
 \right) = r-1 \, ,
\end{align*}
as desired. If $M=0$, then $ \nu_{I}\!\left(
 \Res_{\sym{d-1}}^{\sym{d}}(M)
 \right) = d = \nu_{I}(M) - 1$ again.
\end{proof}

\begin{rem}[Minimum number of rows]
As alluded to in \cite[beginning of Section 3, Remark 3.7]{nss-regularity}, the invariant $\nu_{I}$ is an attempt to define ``minimum number of rows in a simple object'' for a general ring $\comm$. Indeed if $\comm$ is a field of characteristic zero and $M$ a $\comm\sym{d}$-module, writing $\specht{\comm}{\lambda}$ for the Specht $\comm\sym{d}$-module associated to a partition $\lambda \vdash d$ and
\begin{align*}
 \Lambda(M) := \left\{
 \lambda \vdash d :
 \specht{\comm}{\lambda} \text{ is a summand of } M \right\} \, ,
\end{align*}
the invariant  
\begin{align*}
 \nu(M) := 
\begin{cases}
 d+1 & \text{if $M=0$,}
 \\
 \min\!\left\{
 \text{number of rows in $\lambda$} : \lambda \in \Lambda(M) \right\} 
 &\text{otherwise,}
\end{cases}
\end{align*}
satisfies part (1) of Proposition \ref{nu-props} as written, part (2) of Proposition \ref{nu-props} with $\ell = 2$ by Pieri's rule, and 
\begin{align*}
 &\,\,\nu\!\left(
 \Res_{\sym{d-1}}^{\sym{d}}(M)
 \right) 
 \\
 &=
\begin{cases}
\nu(M) & \text{if $M \neq 0$ and every row of every $\lambda \in \Lambda(M)$ has size $\geq 2$,}
\\
\nu(M) - 1 &\text{otherwise.}
\end{cases}
\end{align*}
Note that the first case above cannot occur if $\nu(M) > d/2$, giving a similar statement to part (3) of Proposition \ref{nu-props} with $\ell = 2$.
\end{rem}

\section{$\FI$-modules}

\paragraph{The shift functor.} Given $V \colon \FI \rarr \lMod{\comm}$,  we write $\shift{V}{}$ for the composition 
\begin{align*}
 \FI \xrightarrow{- \sqcup \{*\}} \FI \xrightarrow{V} \lMod{\comm} \, .
\end{align*}
The main property of the shift is that as an $\sym{n}$-representation, we have
\begin{align*}
 (\shift{V}{})_{n} =\Res_{\sym{n}}^{\sym{n+1}} V_{n+1} \, .
\end{align*}
If $\deg(V) < \infty$ and $V \neq 0$, we also have $ \deg(\shift{V}{}) = 
\deg(V) - 1$.

\begin{prop} \label{shift-stuff}
 Let $\comm$ be a commutative ring, and $V \colon \FI \rarr \lMod{\comm}$ be presented in finite degrees such that neither $V$ nor $\shift{V}{}$ is $\cofi{0}$-acyclic. Writing $\gamma := \crit(V)$, $\rho := \reg(V)$, we have 
\begin{align*}
 \reg(\shift{V}{}) \leq \rho-1 \quad \text{and} \quad
 \crit(\shift{V}{}) \leq \gamma \, .
\end{align*}
Moreover, the following are equivalent:
\begin{birki}
\item $\reg(\shift{V}{}) < \rho-1$.
\vspace{0.1cm}
\item $h^{\gamma}(V) = 0$.
\vspace{0.1cm}
\item $\{j : h^{j}(V) \geq 0\} \subseteq [0,\,\gamma]$ and $\{j : h^{j}(\shift{V}{}) \geq 0\} \subseteq [0,\,\gamma-1]$.
\vspace{0.1cm}
\item $\crit(\shift{V}{}) < \gamma$.
\end{birki}
\end{prop}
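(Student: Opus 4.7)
My approach hinges on a standard compatibility between shift and local cohomology: since the shift functor is exact, commutes with taking torsion submodules, and preserves torsion-free $\FI$-modules, one obtains $\locoh{j}(\shift{V}{}) \cong \shift{\locoh{j}(V)}{}$ for all $j \geq 0$. Because shift lowers the degree of any nonzero $\FI$-module by exactly $1$, this becomes the explicit formula $h^{j}(\shift{V}{}) = h^{j}(V) - 1$ when $h^{j}(V) \geq 1$, and $h^{j}(\shift{V}{}) = -1$ otherwise. I would establish (or cite) this as the single input, after which everything reduces to bookkeeping with Theorem \ref{nss-main}.

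Granting this, Theorem \ref{nss-main} gives $\reg(\shift{V}{}) = \max\{h^{j}(V) + j - 1 : h^{j}(V) \geq 1\} \leq \rho - 1$. For $\crit(\shift{V}{}) \leq \gamma$ I split on whether $h^{\gamma}(V)$ is positive. If $h^{\gamma}(V) \geq 1$, then $h^{\gamma}(\shift{V}{}) + \gamma = \rho - 1$, while the minimality of $\gamma = \crit(V)$ forces $h^{j}(V) + j < \rho$ for $j < \gamma$ with $h^{j}(V) \geq 0$, so $h^{j}(\shift{V}{}) + j < \rho - 1$. Hence $\reg(\shift{V}{}) = \rho - 1$ and $\crit(\shift{V}{}) = \gamma$. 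If instead $h^{\gamma}(V) = 0$, then $\gamma = \rho$, and for $j > \gamma$ the bound $h^{j}(V) + j \leq \rho$ forces $h^{j}(V) = -1$; thus $\{j : h^{j}(V) \geq 0\} \subseteq [0,\gamma]$ and $\{j : h^{j}(\shift{V}{}) \geq 0\} \subseteq [0,\gamma-1]$, giving $\crit(\shift{V}{}) \leq \gamma - 1$.

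The four equivalences now drop out of this dichotomy. (1) $\Leftrightarrow$ (2) is just the two cases: case A ($h^{\gamma}(V) \geq 1$) yields $\reg(\shift{V}{}) = \rho - 1$, while case B ($h^{\gamma}(V) = 0$) yields $\reg(\shift{V}{}) \leq \rho - 2$. The implication (2) $\Rightarrow$ (3) was already derived as part of case B. For (3) $\Rightarrow$ (4), since $\shift{V}{}$ is not $\cofi{0}$-acyclic, $\crit(\shift{V}{})$ lies in the nonempty set $\{j : h^{j}(\shift{V}{}) \geq 0\} \subseteq [0,\gamma-1]$. For (4) $\Rightarrow$ (1), if $\reg(\shift{V}{}) = \rho - 1$ then $j := \crit(\shift{V}{})$ satisfies $h^{j}(V) + j = \rho$ with $h^{j}(V) \geq 1$, and the minimality of $\gamma$ forces $j \geq \gamma$, contradicting (4). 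The only real obstacle is the input identity relating shift to local cohomology; once that is in place, the rest is an elementary dichotomy on the value of $h^{\gamma}(V)$ together with the definitions of $\reg$ and $\crit$ in terms of the $h^{j}$'s.
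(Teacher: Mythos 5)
Your proof is correct and rests on the same single input as the paper's, namely $\locoh{j}(\shift{V}{}) \cong \shift{\locoh{j}(V)}{}$ (equivalently $h^{j}(\shift{V}{}) = h^{j}(V) - 1$ when $h^{j}(V) \geq 1$ and $= -1$ otherwise), combined with the characterization of $\reg$ and $\crit$ from Theorem \ref{nss-main}. The only organizational difference is that you run a clean dichotomy on the value of $h^{\gamma}(V)$ (showing $\crit(\shift{V}{}) = \gamma$ when $h^\gamma(V)\geq 1$ and $\crit(\shift{V}{}) \leq \gamma-1$ when $h^\gamma(V)=0$), whereas the paper proves the two inequalities first and then traverses the cycle $(1)\Rightarrow(2)\Rightarrow(3)\Rightarrow(4)\Rightarrow(1)$; the underlying bookkeeping is the same.
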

\begin{proof}
By Theorem \ref{nss-main} and the fact that the shift functor $\shift{}{}$ commutes with local cohomology $\locoh{\star}$ (for instance by \cite[proof of Proposition 2.6]{bahran-polynomial} or in this context by \cite[Theorem 2.10]{cmnr-range}), we have 
\begin{align*}
 \reg(\shift{V}{}) &= \max\{h^{j}(\shift{V}{}) + j : \shift{\!\locoh{j}(V)}{} \neq 0\} \\
 &= \max\{h^{j}(V) + j - 1 : h^{j}(V) \geq 1\} 
 \\
 &\leq \max\{h^{j}(V) + j - 1 : h^{j}(V) \geq 0\} = \rho-1 \, .
\end{align*}
We also see from here that because $h^{\gamma}(V) + \gamma - 1 = \rho-1$, (1) implies that $h^{\gamma}(V) \ngeq 1$ and hence (2). Next, assume (2). Then $\locoh{j}(V) \neq 0$ implies by Theorem \ref{nss-main} that 
\begin{align*}
 0 \leq j \leq h^{j}(V) + j \leq \rho = h^{\gamma}(V) + \gamma = \gamma \, ,
\end{align*}
and because $h^{j}(\shift{V}{}) \geq 0$ if and only if $h^{j}(V) \geq 1$, we get (3). It is evident that (3) implies (4). Finally, (4) implies that if we write $\beta := \crit(\shift{V}{})$, then by the equation we established for $\reg(\shift{V}{})$ in the beginning of the proof and the definition of $\gamma = \crit(V)$, we have
\begin{align*}
 \reg(\shift{V}{}) &= \max\{h^{j}(V) + j - 1 : h^{j}(V) \geq 1\}
 \\
 &= h^{\beta}(V) + \beta - 1 < h^{\gamma}(V) + \gamma - 1 = \rho - 1 \, .
\end{align*}
Having established the equivalence of (1)-(4), to show $\crit(\shift{V}{}) \leq \gamma$ in general, we may assume (2) does not hold, that is, $h^{\gamma}(V) \geq 1$. In this case $\reg(\shift{V}{}) = \rho-1$ and
\begin{align*}
 \crit(\shift{V}{}) = 
  \min\!\left\{
 j : h^{j}(V) \geq 1 \text{ and }
 h^{j}(V) + j - 1 = \rho-1
 \right\} = \gamma \, .
\end{align*}
\end{proof}

\begin{thm}[\cite{nss-regularity}] \label{nss-nu}
Let $\comm$ be a commutative ring, $\ell \geq 2$, and $I \subseteq \comm\sym{\ell}$ be a good ideal. Then for $V \colon \FI \rarr \lMod{\comm}$ presented in finite degrees which is not $\cofi{0}$-acyclic, writing $\gamma := \crit(V)$, $\rho := \reg(V)$, and
\begin{align*}
 \gamma_{a}(V) :=  \nu_{I}\!\left(
 \cofi{a}(V)_{a + \rho}
 \right) - a
 \, ,
\end{align*}
we have $\gamma_{a}(V) = \gamma$ for $a \gg 0$.
\end{thm}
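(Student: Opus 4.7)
My plan is to show that, for $a$ sufficiently large, the $\sym{a+\rho}$-module $\cofi{a}(V)_{a+\rho}$ admits an induced description
\begin{align*}
 \cofi{a}(V)_{a+\rho} \;\cong\; \Ind_{\sym{\rho-\gamma}\times\sym{a+\gamma}}^{\sym{a+\rho}}\!\bigl( N \boxtimes \eps_{a+\gamma} \bigr)
\end{align*}
for some nonzero $\comm\sym{\rho-\gamma}$-module $N$ intrinsically attached to $V$ (essentially $\locoh{\gamma}(V)_{0}$ after enough shifts). Granted this, Proposition \ref{nu-props}(2) applied with $d = \rho - \gamma$ gives $\nu_{I}(\cofi{a}(V)_{a+\rho}) = a + \gamma$ whenever $a + \gamma \geq (\ell-1)(\rho - \gamma)$, yielding $\gamma_{a}(V) = \gamma$ in the claimed asymptotic range. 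Since $\cofi{a}(V)_{a+\rho} \neq 0$ for $a \gg 0$ by Theorem \ref{nss-main}, this suffices.

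The induction runs on $h^{\gamma}(V) = \rho - \gamma$ via the shift functor $\shift{}{}$. Whenever $h^{\gamma}(V) \geq 1$, Proposition \ref{shift-stuff} ensures that $\crit(\shift V) = \gamma$, $\reg(\shift V) = \rho - 1$, and $h^{\gamma}(\shift V) = h^{\gamma}(V) - 1$, so iterating reduces to the base case $\rho = \gamma$. For the inductive step I would compare $\cofi{a}(V)_{a+\rho}$ and $\cofi{a}(\shift V)_{a+\rho - 1}$ through the Mackey/Pieri branching
\begin{align*}
 \Res_{\sym{n-1}}^{\sym{n}}\!\Ind_{\sym{d}\times\sym{n-d}}^{\sym{n}}\!\bigl(N\boxtimes \eps_{n-d}\bigr) \cong \Ind_{\sym{d-1}\times\sym{n-d}}^{\sym{n-1}}\!\bigl(\Res N \boxtimes \eps_{n-d}\bigr) \oplus \Ind_{\sym{d}\times\sym{n-d-1}}^{\sym{n-1}}\!\bigl(N \boxtimes \eps_{n-d-1}\bigr)
\end{align*}
applied with $n = a + \rho$ and $d = \rho - \gamma$: the first summand matches the target induced form for $\shift V$ (with parameter $\Res N$), while the second matches the target form for $V$ with the index $a$ replaced by $a - 1$. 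This sets up a joint induction on both $h^{\gamma}(V)$ and $a$, and Proposition \ref{nu-props}(3) simultaneously keeps track of $\nu_{I}$ under the restriction.

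The base case $\rho = \gamma$ is where $\locoh{j}(V) = 0$ for $j \neq \gamma$ and $\locoh{\gamma}(V)_{0} = E \neq 0$. A secondary induction on $\gamma$ handles this. The elementary case $\gamma = 0$ is $V$ concentrated in degree zero with value $E$, and the sign-Koszul resolution of the constant $\FI$-module on $E$ yields $\cofi{a}(V)_{a} \cong \eps_{a} \otimes_{\comm} E$ directly; the inductive step $\gamma \rightsquigarrow \gamma + 1$ follows from the long exact sequence of $\cofi{\star}$ associated to a short exact sequence $0 \to V' \to W \to V \to 0$ with $W$ a free $\FI$-module surjecting onto $V$. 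The outcome in every case is $\cofi{a}(V)_{a+\gamma} \cong \eps_{a+\gamma} \otimes_{\comm} E$, matching the target form with $N = E$ viewed as a $\comm\sym{0}$-module.

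The hardest step is the shift comparison: while $\shift{}{}$ commutes with local cohomology, it does not commute with $\cofi{\star}$ on the nose, so one must prove that the natural comparison map $\cofi{a}(\shift V)_{n} \to \Res \cofi{a}(V)_{n+1}$ becomes an isomorphism once $a$ lies above a quantitative threshold depending on $\tgen(V)$ and $\trel(V)$. This should be accessible via the existing finite-degree presentation bounds for $\cofi{\star}$, but the bookkeeping on the $\sym{}$-module structure (rather than just on the underlying graded $\comm$-modules) is where the real work lies.
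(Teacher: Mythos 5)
Your argument hinges on a claimed structural isomorphism $\cofi{a}(V)_{a+\rho} \cong \Ind_{\sym{\rho-\gamma}\times\sym{a+\gamma}}^{\sym{a+\rho}}(N \boxtimes \eps_{a+\gamma})$ for a single $\comm\sym{\rho-\gamma}$-module $N$, but this is too strong and is not true in general. The paper instead passes to the derived category: by \cite[Theorem 2.10]{cmnr-range} there is an exact triangle $T \rarr V \rarr F$ with $F$ a bounded complex of $\cofi{0}$-acyclics and $T$ a bounded torsion complex computing $\locoh{\star}(V)$; then $\cofi{a}(V) \cong \cofibol{a}(T)$ and \cite[Proposition 4.3]{nss-regularity} computes $\nu_{I}$ of this hyperhomology. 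In that computation the top-degree piece $\cofi{a}(V)_{a+\rho}$ is \emph{filtered}, with one layer for each $q\geq\gamma$ satisfying $h^{q}(V)+q=\rho$, and each layer has parameter $d = h^{q}(V)=\rho-q$; the value $a+\gamma$ emerges as the \emph{minimum} of $\nu_{I}$ over these layers via Proposition \ref{nu-props}(1), not from a single clean induced description. So claiming the induced isomorphism and reducing to Proposition \ref{nu-props}(2) skips over precisely the interaction between the different local cohomology degrees that the result is quantifying.

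The base cases are also misidentified, which would derail the double induction even if the structural claim were granted. The equality $\rho=\gamma$ does \emph{not} imply $\locoh{j}(V)=0$ for all $j\neq\gamma$: it forces vanishing only for $j>\gamma$, while $\locoh{j}(V)$ may be nonzero for $j<\gamma$ as long as $h^{j}(V)+j<\rho$. Likewise $\gamma=\rho=0$ does not mean $V$ is concentrated in degree zero — it says only that $\locoh{0}(V)\neq 0$ has degree $0$ and that higher local cohomology vanishes, which is satisfied (for instance) by any extension of a nonzero semi-induced module by a torsion module supported in degree $0$; such $V$ is not concentrated anywhere. Finally, the "shift comparison" you flag as the hardest step is indeed where the paper puts the quantitative work, but there is no natural map $\cofi{a}(\shift V)_{n}\rarr \Res\cofi{a}(V)_{n+1}$ that becomes an isomorphism; what exists is the Gan exact sequence $\cofi{a+1}(\shift V)\rarr \shift\cofi{a+1}(V)\rarr\cofi{a}(V)\rarr\cofi{a}(\shift V)$ (used in the paper's Theorem \ref{sharp-nu}, which is the later refinement of this theorem), and the two flanking error terms $\cofi{\bullet}(\shift V)$ must be controlled by induction rather than assumed away.
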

\begin{proof}
 By \cite[Theorem 2.10]{cmnr-range} there is a bounded cochain complex $F$ of $\cofi{0}$-acyclic $\FI$-modules generated in finite degrees, and a chain map $\alpha\colon V \rarr F$ whose mapping cone $\cone(\alpha)$ is exact in high enough degrees (consider $V$ as a cochain complex concentrated in degree 0), that is, there exists $N \in \nn$ such that
\begin{align*}
 \deg\!\left(\co^{j}(\cone(\alpha))\right) \leq N \,\, \text{for every}
 \,\, j \, .
\end{align*}
Thus, with the notation $(-)_{\leq N}$ described in \cite[Lemma 2.3]{nss-regularity}, the natural map 
\begin{align*}
 \cone(\alpha) \rarr \cone(\alpha)_{\leq N}
\end{align*}
is a quasi-isomorphism. Consequently, the bounded cochain complex
\begin{align*}
 T := \cone(\alpha)_{\leq N}[1]
\end{align*}
 (which is suppported in cohomological degrees $\geq 0$ and $\FI$-degrees $\leq N$) sits in an exact triangle $T \rarr V \rarr F \rarr$ in the (bounded) derived category of $\FI$-modules (see \cite[Theorem 2.4]{nss-regularity}). We can now argue 
\begin{itemize}
 \item as in \cite[Proposition 2.6]{nss-regularity} to conclude $\locoh{j}(V) \cong \co^{j}(T)$ for every $j$, 
,
 \item as in \cite[proof of Theorem 1.1]{nss-regularity} to conclude $\cofi{a}(V) \cong \cofibol{a}(T)$ for every $a \geq 1$ (here $\cofibol{*}$ is the left hyper-derived functor of $\cofi{0}$ \cite[Definition 5.7.4]{weibel-hom-alg})

\end{itemize}
Now because \cite[Proposition 4.3]{nss-regularity} applies to every bounded $\FI$-complex of finite degree, the number 
\begin{align*}
 \rho &=
 \max\{h^{j}(V) + j : \locoh{j}(V) \neq 0\}
 \\
 &=
 \max\{\deg(\co^{j}(T)) + j : \co^{j}(T) \neq 0\}  \, ,
\end{align*} 
where the first equality holds by Theorem \ref{nss-main}, satisfies
\begin{align*}
 \gamma_{a}(V) = \nu_{I}\!\left(\cofibol{a}(T)_{a+\rho}\right) - a = \gamma
\end{align*}
for $a \gg 0$.
\end{proof}
\subsection{Explicit bounds}
We obtain essentially all our explicit bounds in this section.
\begin{thm} \label{sharp-nu}
Let $\comm$ be a commutative ring, $\ell \geq 2$, and $I \subseteq \comm\sym{\ell}$ be a good ideal. Then for $V \colon \FI \rarr \lMod{\comm}$ presented in finite degrees which is not $\cofi{0}$-acyclic, writing $\gamma := \crit(V)$, $\rho := \reg(V)$, and
\begin{align*}
 \gamma_{a}(V) :=  \nu_{I}\!\left(
 \cofi{a}(V)_{a + \rho}
 \right) - a
 \, ,
\end{align*}
we have a weakly decreasing sequence
\begin{align*}
  \rho+1 \geq \gamma_{1}(V) \geq \gamma_{2}(V) \geq \gamma_{3}(V) \geq \cdots
\end{align*}
that stabilizes at $\gamma$ such that either $\gamma_{1}(V) = \gamma$ immediately, or otherwise 
\begin{align*}
 \max\!\left\{a \geq 1:
 \gamma_{a}(V) > \gamma
 \right\} = (\ell-1)h^{\gamma}(V)-\gamma-1 \, .
\end{align*}
\end{thm}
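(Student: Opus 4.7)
My plan is to prove Theorem \ref{sharp-nu} by extending the analysis in the proof of Theorem \ref{nss-nu}. Recall the bounded cochain complex $T$ of $\FI$-modules constructed there, satisfying $\co^q(T) \cong \locoh{q}(V)$ and $\cofibol{a}(T) \cong \cofi{a}(V)$ for $a \geq 1$. The natural hyperhomology spectral sequence of $\cofi{0}$ applied to $T$ takes the form
\begin{align*}
 E_2^{-p,q} = \cofi{p}(\locoh{q}(V)) \Longrightarrow \cofi{p-q}(V),
\end{align*}
and converges in each $\FI$-degree in finitely many pages, filtering $\cofi{a}(V)_{a+\rho}$ by $E_\infty$-subquotients of the form $\cofi{a+q}(\locoh{q}(V))_{a+\rho}$. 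Proposition \ref{nu-props}(1) then reduces $\nu_I(\cofi{a}(V)_{a+\rho})$ to the minimum of $\nu_I$ over these subquotients.

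Next I would identify which $q$'s contribute. For any torsion $\FI$-module $M$ of top $\FI$-degree $d$, the top-degree piece of its $\FI$-homology takes the universal induced-sign form
\begin{align*}
 \cofi{i}(M)_{d + i} \cong \Ind_{\sym{d} \times \sym{i}}^{\sym{d + i}}\!\left(M_{d} \boxtimes \eps_{i}\right),
\end{align*}
a fact I would verify by filtering $M$ by its $\FI$-degrees to reduce to the case of $M$ concentrated in a single degree. Applied to $M = \locoh{q}(V)$ (of top $\FI$-degree $h^q(V)$), the spectral-sequence piece $\cofi{a+q}(\locoh{q}(V))_{a+\rho}$ is non-zero only when $a + \rho \leq h^q(V) + (a+q)$, i.e., $h^q(V) + q \geq \rho$; combined with the inequality $h^q(V) + q \leq \rho$ from Theorem \ref{nss-main}, this forces $h^q(V) + q = \rho$, so only the ``critical'' $q$'s (those $q \geq \gamma$ with $h^q(V) + q = \rho$) contribute, each via an induced sign piece to which Proposition \ref{nu-props}(2) applies.

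Applying Proposition \ref{nu-props}(2) to each critical contribution gives $\nu_I = a + q$ as soon as $a + q \geq (\ell-1) h^q(V) = (\ell-1)(\rho - q)$. The minimum over critical $q$ is attained at $q = \gamma$ with threshold $a \geq (\ell-1)h^\gamma(V) - \gamma$; in that range $\nu_I(\cofi{a}(V)_{a+\rho}) = a + \gamma$ and hence $\gamma_a(V) = \gamma$. The upper bound $\gamma_a(V) \leq \rho + 1$ is immediate from $\nu_I(M) \leq d + 1$ on $\comm\sym{d}$-modules. Weak monotonicity should come out either of comparing filtrations across consecutive $a$, or of inducting on $h^\gamma(V)$ via Proposition \ref{shift-stuff}: passing to $\shift{V}{}$ reduces $h^\gamma$ by one and moves the stabilization threshold down by exactly $\ell - 1$, matching the $\ell-1$ iterated applications of Proposition \ref{nu-props}(3) needed to drop $\nu_I$ by one under successive restrictions.

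The main obstacle is sharpness: the assertion $\gamma_a(V) > \gamma$ at $a = (\ell-1)h^\gamma(V) - \gamma - 1$ whenever this number is $\geq 1$. This demands a refinement of Proposition \ref{nu-props}(2), namely that $\nu_I\!\left(\Ind_{\sym{d} \times \sym{a}}^{\sym{d+a}}(M \boxtimes \eps_{a})\right)$ is \emph{strictly} greater than $a$ whenever $M \neq 0$ and $a < (\ell-1) d$. I would attempt this directly from Definition \ref{nu-defn}: a Mackey-style analysis of the induced module, combined with the good-ideal property that $I$ annihilates $\eps_{\ell}$, should show that in the unstable range the annihilator already contains $I^{\boxtimes(a+1)}$, while a specific element still witnesses $I^{\boxtimes a} \nsubseteq \ann(\cdot)$. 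Combined with the observation that any critical $q > \gamma$ contributes $\nu_I \geq a + \gamma + 1$ and hence cannot lower the overall minimum below $a + \gamma + 1$, this yields $\gamma_a(V) > \gamma$ at the critical $a$ and completes the proof.
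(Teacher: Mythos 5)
Your approach departs from the paper's, which runs an induction on the shift functor: evaluating Gan's long exact sequence
\begin{align*}
 \cofi{a+1}(\shift{V}{})_{a+\rho} \rarr \shift{\!\cofi{a+1}(V)}{}_{a+\rho} \rarr \cofi{a}(V)_{a+\rho} \rarr \cofi{a}(\shift{V}{})_{a+\rho}
\end{align*}
at set size $a+\rho$, killing the last term via $\reg(\shift{V}{}) \leq \rho - 1$, and then applying the restriction dichotomy of Proposition \ref{nu-props}(3). The cited \cite[Proposition 4.3]{nss-regularity} (a spectral sequence argument in the spirit of yours) is used only for the eventual stabilization $\gamma_a(V)=\gamma$, $a\gg 0$; the \emph{sharp} threshold in Theorem \ref{sharp-nu} is obtained by the shift induction, not the spectral sequence.

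There is a fatal gap in your sharpness step: the proposed refinement of Proposition \ref{nu-props}(2) --- that $\nu_I\!\left(\Ind_{\sym{d}\times\sym{a}}^{\sym{d+a}}(M\boxtimes\eps_a)\right) > a$ whenever $M\neq 0$ and $a < (\ell-1)d$ --- is false. Take $\comm$ a field of characteristic zero, $\ell = 2$, $I$ the good ideal from \cite[Proposition 3.1]{nss-regularity}, so that $\nu_I$ is the minimum number of rows as in the paper's remark, and take $M = \comm$ the trivial $\comm\sym{3}$-module, $d = 3$, $a = 2 < 3 = (\ell - 1)d$. By Pieri's rule
\begin{align*}
 \Ind_{\sym{3}\times\sym{2}}^{\sym{5}}(\comm\boxtimes\eps_2) \cong \specht{\comm}{(4,1)} \oplus \specht{\comm}{(3,1,1)},
\end{align*}
which has $\nu_I = \min\{2,3\} = 2 = a$, not $>a$. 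So the strict inequality fails throughout the range where you need it, and the argument establishing $\gamma_a(V) > \gamma$ at $a = (\ell-1)h^\gamma(V) - \gamma - 1$ collapses.

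A second, more structural issue: because of the direction of the inequality in Proposition \ref{nu-props}(1), passing from $E_2$-pages to $E_\infty$-subquotients can only \emph{increase} $\nu_I$, and the filtration of $\cofi{a}(V)_{a+\rho}$ then gives only a lower bound
\begin{align*}
 \nu_I\!\left(\cofi{a}(V)_{a+\rho}\right) \geq \min_q\,\nu_I\!\left(E_2^{-(a+q),q}\right),
\end{align*}
i.e. a lower bound $\gamma_a(V)\geq\gamma$ once $a$ is past the threshold, not the equality you assert. To conclude $\gamma_a(V)=\gamma$ in the stable range you would need either a matching upper bound (which the spectral sequence does not produce) or an invocation of Theorem \ref{nss-nu} --- and to then turn it into the sharp bound you would in any case need to correct control over how $\nu_I$ behaves under restriction, which is exactly what the paper's inductive route via Proposition \ref{nu-props}(3) supplies.

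Finally, the claimed top-degree identification $\cofi{i}(M)_{d+i}\cong \Ind_{\sym{d}\times\sym{i}}^{\sym{d+i}}(M_d\boxtimes\eps_i)$ for general torsion $M$ of top degree $d$ is plausible but is asserted without the vanishing argument that would let you reduce to $M$ supported in a single degree; and the weak monotonicity is only hand-waved. Both are points the paper actually addresses: monotonicity comes directly out of the shift long exact sequence together with Proposition \ref{nu-props}(3).
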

\begin{proof}
 By the definition of $\nu_{I}$ applied to a $\comm\sym{1+\rho}$-module, we have 
$ \nu_{I}\!\left(
 \cofi{1}(V)_{1 + \rho}
 \right) \leq \rho+2$, 
 so $\gamma_{1}(V) \leq \rho + 1$. By Proposition \ref{shift-stuff} either $\shift{V}{}$ is $\cofi{0}$-acyclic or $\reg(\shift{V}{}) \leq \rho-1$. In any case, evaluating the long exact sequence \cite[Theorem 1]{gan-shift-seq} at a set of size $a+\rho$ for a fixed $a \geq 1$, we get an exact sequence
\begin{align*}
 \cofi{a+1}(\shift{V}{})_{a+\rho} \rarr \shift{\!\cofi{a+1}(V)}{}_{a+\rho} \rarr \cofi{a}(V)_{a+\rho} \rarr \cofi{a}(\shift{V}{})_{a+\rho} = 0
\end{align*}
of $\comm\sym{a+\rho}$-modules. Therefore by Proposition \ref{nu-props} we have 
\begin{align*}
 \nu_{I}\!\left(
  \cofi{a}(V)_{a+\rho}
 \right) &\geq 
 \nu_{I}\!\left(
 \shift{\!\cofi{a+1}(V)}{}_{a+\rho}
 \right) \, ,
\end{align*}
and hence
\begin{align*}
 \gamma_{a}(V) &\geq 
 \nu_{I}\!\left(
 \Res_{\sym{a+\rho}}^{\sym{a+\rho+1}}\!\left(
 \cofi{a+1}(V)_{a+\rho+1}
 \right)
 \right) - a
 \geq 
 \nu_{I}\!\left(
 \cofi{a+1}(V)_{a+\rho+1}
 \right) - a - 1 = \gamma_{a+1}(V)
\end{align*}
for every $a \geq 1$, establishing the weakly decreasing sequence. We know the sequence stabilizes at $\gamma$ by Theorem \ref{nss-nu}. We will consider two cases: 
\begin{itemize}
 \item $\shift{V}{}$ is $\cofi{0}$-acyclic or $\reg(\shift{V}{}) < \rho-1$: here $\cofi{a+1}(\shift{V}{})_{a+\rho} = 0$ so we get an isomorphism
\begin{align*}
 \shift{\!\cofi{a+1}(V)}{}_{a+\rho} \cong \cofi{a}(V)_{a+\rho} \, ,
\end{align*}
of $\comm\sym{a+\rho}$-modules. Thus, invoking the full strength of  Proposition \ref{nu-props} part (3), we get
\begin{align*}
 \gamma_{a}(V) &= 
 \nu_{I}\!\left(
 \Res_{\sym{a+\rho}}^{\sym{a+\rho+1}}\!\left(
 \cofi{a+1}(V)_{a+\rho+1}
 \right)
 \right) - a
 \\
 &=
 \begin{cases}
 \gamma_{a+1}(V) + 1 & \text{if $\gamma_{a+1}(V) = \rho - \frac{a+\rho+1}{\ell}$,}
 \\
 \gamma_{a+1}(V) &\text{otherwise.}
 \end{cases}
\end{align*}
If $\gamma_{1}(V) \neq \gamma$, then the maximum index $a \geq 1$ with $\gamma_{a}(V) > \gamma$ will satisfy $\gamma_{a+1}(V) = \gamma$, and by the above will also satisfy
\begin{align*}
 \gamma_{a}(V) = \rho+1 - \frac{a+\rho+1}{\ell} \quad
 \text{and} \quad
 \gamma = \gamma_{a+1}(V) = \rho - \frac{a+\rho+1}{\ell}
\end{align*}
But here $\rho - \gamma = h^{\gamma}(V) = 0$ by \cite[Corollary 2.13]{cmnr-range} and Proposition \ref{shift-stuff}, making the above impossible. Thus $\gamma_{1}(V) = \gamma$ is forced.
\item $\shift{V}{}$ is not $\cofi{0}$-acyclic and $\reg(\shift{V}{}) = \rho-1$: here by Proposition \ref{shift-stuff} we have $\crit(\shift{V}{}) = \gamma$. Applying Proposition \ref{nu-props} part (1) to the exact sequence
\begin{align*}
 \cofi{a+1}(\shift{V}{})_{a+\rho} \rarr \shift{\!\cofi{a+1}(V)}{}_{a+\rho} \rarr \cofi{a}(V)_{a+\rho}
\end{align*}
of $\comm\sym{a+\rho}$-modules, we have 
\begin{align*}
  \min\!\left\{
  \nu_{I}\!\left(\cofi{a+1}(\shift{V}{})_{a+\rho}\right),\,
  \nu_{I}\!\left(\cofi{a}(V)_{a+\rho}\right)
 \right\}
 \leq 
 \nu_{I}\!\left(
  \shift{\!\cofi{a+1}(V)}{}_{a+\rho}
 \right)
 \\
  \min\!\left\{
  \gamma_{a+1}(\shift{V}{}) + 1,\,
  \gamma_{a}(V)
 \right\}
 \leq 
 \begin{cases}
 \gamma_{a+1}(V) + 1 & \text{if $\gamma_{a+1}(V) = \rho -
 \frac{a+\rho+1}{\ell}$,}
 \\
 \gamma_{a+1}(V) &\text{otherwise.}
 \end{cases}
\end{align*}
If $\gamma_{1}(V) \neq \gamma$, then the maximum index $a \geq 1$ with $\gamma_{a}(V) > \gamma$ satisfies $\gamma_{a+1}(V) = \gamma$, and by the above also satisfies
\begin{align*}
  \min\!\left\{
  \gamma_{a+1}(\shift{V}{}) + 1,\,
  \gamma_{a}(V)
 \right\}
 \leq 
 \begin{cases}
 \gamma+1 & \text{if $\gamma = \rho -
 \frac{a+\rho+1}{\ell}$,}
 \\
 \gamma &\text{otherwise.}
 \end{cases}
\end{align*}
Here we know $\gamma_{a+1}(\shift{V}{}) + 1 \geq \gamma + 1$, making the left hand side strictly bigger than $\gamma$, forcing 
\begin{align*}
 \gamma &= \rho - \frac{a+\rho+1}{\ell} 
 \\
 a + \rho + 1 &= \ell(\rho-\gamma) = \ell h^{\gamma}(V)
 \\
 a &= \ell h^{\gamma}(V) - \rho - 1 = (\ell-1) h^{\gamma}(V) - \gamma - 1 \, .
\end{align*}
\end{itemize}
\end{proof}

%

\begin{cor} \label{good-ideal-range}
 Let $\comm$ be a commutative ring, $\ell \geq 2$, and assume $\comm\sym{\ell}$ has a good ideal. Then for $V \colon \FI \rarr \lMod{\comm}$ presented in finite degrees which is not $\cofi{0}$-acyclic, writing $\gamma := \crit(V)$, $\rho := \reg(V)$, we have 
$t_{a}(V) - a = \rho$ whenever
\begin{align*}
  a \geq \max\{1,(\ell-1)h^{\gamma}(V)-\gamma\} \, .
\end{align*}
\end{cor}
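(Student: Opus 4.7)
The plan is to show that the corollary falls out essentially immediately from Theorem \ref{sharp-nu} combined with the basic fact from Definition \ref{nu-defn} that $\nu_I(M) = d+1$ characterizes the zero $\comm\sym{d}$-module. The only work is to translate the dichotomy appearing in the conclusion of Theorem \ref{sharp-nu} into a single uniform inequality, which is what the $\max\{1,-\}$ accomplishes.

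First I would fix a good ideal $I \subseteq \comm\sym{\ell}$, available by hypothesis, and apply Theorem \ref{sharp-nu} to $V$. The produced sequence $\gamma_a(V)$ is weakly decreasing and stabilizes at $\gamma$, hence is bounded below by $\gamma$ throughout (any strict dip below $\gamma$ would contradict later stabilization at $\gamma$ since the sequence is nonincreasing). I would then verify that $\gamma_a(V) = \gamma$ whenever $a \geq \max\{1, (\ell-1)h^\gamma(V) - \gamma\}$ by casework on the dichotomy of Theorem \ref{sharp-nu}: if $\gamma_1(V) = \gamma$ already, then by monotonicity and the lower bound the sequence is constantly $\gamma$ for all $a \geq 1$; in the other case the last index $a$ with $\gamma_a(V) > \gamma$ is $(\ell-1)h^\gamma(V) - \gamma - 1 \geq 1$, so stabilization occurs once $a \geq (\ell-1)h^\gamma(V) - \gamma \geq 2$, which is exactly the max.

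Next I would unpack what $\gamma_a(V) = \gamma$ means: setting $M := \cofi{a}(V)_{a+\rho}$, a $\comm\sym{a+\rho}$-module, this reads $\nu_I(M) = a + \gamma$. Since $h^\gamma(V) \geq 0$ and $\gamma + h^\gamma(V) = \rho$ (from the definition of $\crit$), we have $\gamma \leq \rho$ and hence $\nu_I(M) = a+\gamma \leq a + \rho < (a+\rho)+1$. By Definition \ref{nu-defn} this strict inequality forces $M \neq 0$, i.e., $\cofi{a}(V)_{a+\rho} \neq 0$, which gives $t_a(V) \geq a + \rho$. The opposite inequality $t_a(V) \leq a + \rho$ is immediate from Theorem \ref{nss-main} (the sequence $t_i(V)-i$ stabilizes at $\rho$, so is bounded above by $\rho$). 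Combining yields $t_a(V) - a = \rho$.

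There is no genuine obstacle here: all the hard content was absorbed by Theorem \ref{sharp-nu} and the observation that the bound on $\nu_I$ is strict on nonzero modules. If anything demands a moment's care, it is the verification that the max with $1$ correctly subsumes both branches of the dichotomy, so that no separate handling of the edge case $\gamma_1(V) = \gamma$ is needed in the final statement.
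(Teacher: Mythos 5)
Your proposal is correct and follows the same route as the paper's own (very terse) proof: the paper also combines Theorem \ref{sharp-nu} with the observation that $\nu_I$ of a nonzero $\comm\sym{d}$-module is at most $d$, plus $\gamma \leq \rho$, to force $\cofi{a}(V)_{a+\rho} \neq 0$ in the stated range. You merely make explicit the two pieces the paper compresses into one sentence: the casework showing $\max\{1,(\ell-1)h^\gamma(V)-\gamma\}$ correctly reads off the stabilization threshold from Theorem \ref{sharp-nu}'s dichotomy, and the matching upper bound $t_a(V)-a\leq\rho$ coming from the weakly increasing sequence in Theorem \ref{nss-main}.
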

\begin{proof}
 Writing $I \subseteq \comm\sym{\ell}$ for a choice of good ideal, for the non-vanishing $\cofi{a}(V)_{a + \rho} \neq 0$, it suffices to have
\begin{align*}
 \nu_{I}\!\left(\cofi{a}(V)_{a + \rho}\right) \leq a+\rho \, ,
\end{align*}
which is guaranteed in the asserted range by Theorem \ref{sharp-nu} since $\gamma \leq \rho$.
\end{proof}

\subsection{Localization} 
As in \cite[Section 4]{nss-regularity}, we shall prove Theorem \ref{main} by reducing the general case to Corollary \ref{good-ideal-range} via localization so that good ideals are available, namely \cite[Propositions 3.1, 3.2]{nss-regularity}. We include preliminaries about how the homological invariants of $\FI$-modules interact with localization.

Given a multiplicatively closed subset $\Omega$ of a commutative ring $\comm$, we write $\Omega^{-1}\comm$ for the corresponding localization. Given $V \colon \FI \rarr \lMod{\comm}$, we write 
\begin{align*}
 \Omega^{-1} V \colon \FI \rarr \lMod{\Omega^{-1}\comm}
\end{align*}
for $V$ composed with the localization functor $\Omega^{-1} \colon \lMod{\comm} \rarr \lMod{\Omega^{-1}\comm}$. Note that $\Omega^{-1}$ is the left adjoint of the inclusion $\lMod{\Omega^{-1}\comm} \emb \lMod{\comm}$. The assignment $V \mapsto \Omega^{-1}V$ is itself (with some abuse of notation) a functor 
\begin{align*}
 \Omega^{-1} \colon [\FI,\,\lMod{\comm}] \rarr [\FI,\,\lMod{\Omega^{-1}\comm}]
\end{align*}
which is a left adjoint. If $\Omega = \{\ell^{n} :n\in \nn\}$ for some $\ell \in \comm$, we write $\locl{\comm} := \Omega^{-1}\comm$ and $\locl{V} := \Omega^{-1}V$.

\begin{lem} \label{tort}
Let $\comm$ be a commutative ring and $M$ be a $\comm$-module. Then $M = 0$ if and only if both $\loct{M} = 0$ and $\locr{M} = 0$.
\end{lem}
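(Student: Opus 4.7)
The plan is straightforward Bezout-style annihilator juggling. The forward direction is immediate: localization is a functor, so $M = 0$ forces $\loct{M} = 0$ and $\locr{M} = 0$.

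For the converse, I would argue element-wise. Fix $m \in M$. From $\loct{M} = 0$, the image of $m$ in $M[1/2]$ is zero, so by the standard description of localization there exists $n \geq 0$ such that $2^{n} m = 0$ in $M$. Symmetrically, from $\locr{M} = 0$ we obtain $k \geq 0$ with $3^{k} m = 0$. Since $\gcd(2^{n}, 3^{k}) = 1$, Bezout's identity supplies integers $a, b$ with $a \cdot 2^{n} + b \cdot 3^{k} = 1$, and their images in $\comm$ give
\begin{align*}
 m = 1 \cdot m = (a \cdot 2^{n} + b \cdot 3^{k}) m = a(2^{n} m) + b(3^{k} m) = 0 \, .
\end{align*}
As $m$ was arbitrary, $M = 0$.

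There is no real obstacle here; the only thing to watch is that we use the image of the integers $a, b$ in $\comm$ under the unique ring map $\mathbb{Z} \rarr \comm$, so that the Bezout identity is applied inside $\comm$ and then acts on $M$. No finite generation hypothesis on $M$ is needed because the argument is carried out one element at a time.
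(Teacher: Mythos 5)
Your proof is correct and is essentially the paper's own argument: the paper also reduces to a single element $x$, extracts $2^{a}x = 0$ and $3^{b}x = 0$ from the vanishing of the two localizations, and concludes $x = 0$ from coprimality, which is precisely the Bezout step you spelled out. The only difference is that you made the Bezout identity (and the use of the structure map $\mathbb{Z} \rarr \comm$) explicit where the paper leaves it implicit.
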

\begin{proof}
For the nontrivial implication, suppose $\loct{M} = 0$ and $\locr{M} = 0$. Fix $x \in M$. Because $x$ lies in the kernel of $M\rarr \loct{M}$, there exists $a \in \nn$ such that $2^{a}x = 0$, and similarly there exists $b \in \nn$ such that $3^{b}x = 0$. Thus $x = 0$ because $2^{a}$ and $3^{b}$ are coprime.
\end{proof}

\begin{cor} \label{loc-deg}
 Given a commutative ring $\comm$ and $W \colon \FI \rarr \lMod{\comm}$, 
\begin{align*}
 \deg(W) = \max\!\left\{
 \deg\!\left(\loct{W}\right),\,
 \deg\!\left(\locr{W}\right) \,
 \right\} 
\end{align*}
\end{cor}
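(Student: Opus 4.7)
The plan is to prove both inequalities between $\deg(W)$ and $\max\{\deg(\loct{W}), \deg(\locr{W})\}$ by working pointwise, using the fact that localization is computed componentwise (so $(\loct{W})_S = \loct{W_S}$ and similarly for $\locr{W}$) together with Lemma \ref{tort}.

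For the inequality $\max\{\deg(\loct{W}), \deg(\locr{W})\} \leq \deg(W)$, I would observe that if $W_S = 0$ then $\loct{W_S} = (\loct{W})_S = 0$ and likewise after inverting $3$, since localization sends $0$ to $0$. So any $d$ witnessing $\deg(W) \leq d$ also witnesses $\deg(\loct{W}) \leq d$ and $\deg(\locr{W}) \leq d$.

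For the reverse inequality $\deg(W) \leq \max\{\deg(\loct{W}), \deg(\locr{W})\}$, let $d$ denote the right hand side. If $d = \infty$ there is nothing to prove, so I would assume $d < \infty$. Then for every finite set $S$ with $|S| > d$, we have $(\loct{W})_S = 0$ and $(\locr{W})_S = 0$, that is, $\loct{W_S} = 0$ and $\locr{W_S} = 0$. Lemma \ref{tort} applied to the $\comm$-module $W_S$ then gives $W_S = 0$, showing $\deg(W) \leq d$.

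There is no real obstacle here; the statement is essentially a pointwise reformulation of Lemma \ref{tort}. The only mild care needed is to ensure the argument handles the case $\deg(W) = \infty$ correctly, but this follows from the contrapositive of the second inequality: if the max on the right is finite, then so is $\deg(W)$, and bounded by it.
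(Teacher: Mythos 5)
Your proof is correct and takes essentially the same approach as the paper: both use the fact that localization is computed componentwise together with Lemma \ref{tort}, with the paper phrasing the argument as an equality of rays $[a,\infty]=[b,\infty]\cap[c,\infty]$ while you split it into the two constituent inequalities.
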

\begin{proof}
By the equalities $\left(\loct{W}\right)_{\!S} = \loct{W_{S}}$, $\left(\locr{W}\right)_{\!S} = \locr{W_{S}}$ and Lemma \ref{tort}, 
\begin{align*}
 \{d \geq -1 : W_{S} = 0 \text{ for } |S| > d\}
\end{align*}
 is equal to
\begin{align*}
 \left\{
 d \geq -1 :\! \left(\loct{W}\right)_{\!S} \!= 0 \text{ for } |S| > d
 \right\} \cap
 \left\{
 d \geq -1 :\! \left(\locr{W}\right)_{\!S} \!= 0 \text{ for } |S| > d
 \right\} 
\end{align*}
as subsets of $\{-1,0,1,2,3,\dots\} \cup \{\infty\}$.
Now observe that all three sets in this relation are rays bounded below, that is, writing $a := \deg(W)$, $ b := \deg\!\left(\loct{W}\right)$, $c := \deg\!\left(\locr{W}\right)$, we have
\begin{align*}
 \{d \geq -1 : W_{S} = 0 \text{ for } |S| > d\} &= [a,\infty] \, ,
 \\
 \left\{
 d \geq -1 :\! \left(\loct{W}\right)_{\!S} \!= 0 \text{ for } |S| > d
 \right\} &= [b,\infty] \, ,
 \\
\left\{
 d \geq -1 :\! \left(\locr{W}\right)_{\!S} \!= 0 \text{ for } |S| > d
 \right\} &= [c,\infty] \, . 
\end{align*}
Now from the equality $[a,\infty] = [b,\infty] \cap [c,\infty]$ we deduce $a = \max\{b,c\}$ as desired.
\end{proof}

\begin{prop} \label{cofi-commute}
 Given a commutative ring $\comm$ with a multiplicatively closed subset $\Omega \subseteq \comm$ and $V \colon \FI \rarr \lMod{\comm}$, we have 
\begin{align*}
 \cofi{i}(\Omega^{-1}V) \cong \Omega^{-1}\!\cofi{i}(V)
\end{align*}
for every $i \geq 0$.
\end{prop}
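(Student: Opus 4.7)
The plan is to establish this via the standard ``exact base change'' argument for derived functors. First, I would verify that $\Omega^{-1}$ commutes with $\cofi{0}$ at the underived level. This is immediate from the pointwise cokernel formula
\[
 \cofi{0}(V)_S = \coker\!\left(\bigoplus_{A \subsetneq S} V_A \to V_S\right)
\]
together with the fact that $\Omega^{-1}\colon \lMod{\comm} \to \lMod{\Omega^{-1}\comm}$ is a left adjoint, and hence preserves direct sums and cokernels. The same formula applied on both sides gives a natural isomorphism $\Omega^{-1}\cofi{0}(V) \cong \cofi{0}(\Omega^{-1}V)$.

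Next, I would choose a projective resolution $P_{\bullet} \to V$ in $[\FI,\lMod{\comm}]$ and check that $\Omega^{-1}P_{\bullet} \to \Omega^{-1}V$ is a projective resolution in $[\FI,\lMod{\Omega^{-1}\comm}]$. Pointwise exactness of this sequence follows from the exactness of $\Omega^{-1}$ on $\comm$-modules. For projectivity, I would use that every projective in $[\FI,\lMod{\comm}]$ is a summand of a direct sum of free $\FI$-modules $M_{\comm}(m)$ with $M_{\comm}(m)_{S} := \comm[\operatorname{Inj}(\underline{m},S)]$; the natural isomorphism $\Omega^{-1}M_{\comm}(m) \cong M_{\Omega^{-1}\comm}(m)$, together with the preservation of direct sums and summands by $\Omega^{-1}$, then delivers the claim.

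Finally, combining the two steps, I compute
\[
 \cofi{i}(\Omega^{-1}V) = H_{i}\!\left(\cofi{0}(\Omega^{-1}P_{\bullet})\right) \cong H_{i}\!\left(\Omega^{-1}\cofi{0}(P_{\bullet})\right) \cong \Omega^{-1}H_{i}\!\left(\cofi{0}(P_{\bullet})\right) = \Omega^{-1}\cofi{i}(V),
\]
where the first isomorphism uses the first step and the second uses exactness of $\Omega^{-1}$ on chain complexes. The only substantive point in the whole argument is the preservation of projectives under $\Omega^{-1}$ in the second step, and that reduces to the elementary observation that summands of free $\comm$-modules remain summands of free modules after base change along $\comm \to \Omega^{-1}\comm$; everything else is formal.
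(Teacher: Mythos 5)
Your proof is correct and follows essentially the same route as the paper: commute $\Omega^{-1}$ past $\cofi{0}$ via the pointwise cokernel formula, localize a projective resolution, and take homology using exactness of $\Omega^{-1}$. The one small divergence is how you show $\Omega^{-1}$ preserves projectives: the paper notes that $\Omega^{-1}$ is left adjoint to the \emph{exact} inclusion $\lMod{\Omega^{-1}\comm}\hookrightarrow\lMod{\comm}$ (a left adjoint to an exact functor preserves projectives), while you unwind the classification of projectives in $[\FI,\lMod{\comm}]$ as summands of sums of representables $M_{\comm}(m)$ and observe $\Omega^{-1}M_{\comm}(m)\cong M_{\Omega^{-1}\comm}(m)$; both are valid, and the paper's is just marginally slicker. (Minor slip only in your final remark: what is needed is that free $\FI$-modules over $\comm$, not free $\comm$-modules, are preserved, which is exactly what your body argument establishes.)
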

\begin{proof}
 Let $P_{\star} \rarr V$ be a projective resolution of $V$ in the abelian category $[\FI,\lMod{\comm}]$. Because the localization functor 
\begin{align*}
 \Omega^{-1} \colon [\FI,\,\lMod{\comm}] \rarr [\FI,\,\lMod{\Omega^{-1}\comm}]
\end{align*}
is left adjoint to an exact functor (hence preserves projectives) and is exact itself, $\Omega^{-1}P_{\star} \rarr \Omega^{-1}V$ is a projective resolution of $\Omega^{-1}V$ in $[\FI,\,\lMod{\Omega^{-1}\comm}]$. Therefore  
\begin{align*}
 \cofi{0}(\Omega^{-1}V)_{S} 
 &= \coker\! \left(
 \bigoplus_{A \subsetneq S} \Omega^{-1}V_{A} \rarr \Omega^{-1}V_{S}
 \right)
 \\ 
 &\cong \Omega^{-1}\! \coker \left(
 \bigoplus_{A \subsetneq S} V_{A} \rarr V_{S}
 \right) = \Omega^{-1}\cofi{0}(V)_{S}
\end{align*}
for every finite set $S$ and consequently $\cofi{0}(\Omega^{-1}V) \cong \Omega^{-1}\cofi{0}(V)$. Finally, 
\begin{align*}
 \cofi{i}(\Omega^{-1}V) &\cong \co_{i}(\cofi{0}(\Omega^{-1}P_{\star}))
 \\
 &\cong \co_{i}(\Omega^{-1}\cofi{0}(P_{\star}))
 \\
 &\cong \Omega^{-1}\!\co_{i}(\cofi{0}(P_{\star})) 
 \cong \Omega^{-1}\!\cofi{i}(V)
\end{align*}
for every $i \geq 0$. 
\end{proof}

\begin{cor} \label{loc-cofi}
 Given a commutative ring $\comm$, $i \geq 0$, and $V \colon \FI \rarr \lMod{\comm}$, the following hold:
\begin{birki}
 \item For every $i \geq 0$, we have 
 $t_{i}(V) = \max\left\{
 t_{i}\!\left(\loct{V}\right),\,
 t_{i}\!\left(\locr{V}\right)
 \right\}$.
 \vspace{0.1cm}
 \item  
 $\reg(V) = \max\!\left\{
 \reg\!\left(\loct{V}\right),\,
 \reg\!\left(\locr{V}\right)
 \right\}$. 
\end{birki}

\end{cor}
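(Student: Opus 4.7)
The plan is to use Proposition \ref{cofi-commute} to reduce each claim to Corollary \ref{loc-deg} applied to the $\FI$-modules $\cofi{i}(V)$.

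For part (1), fix $i \geq 0$. By Proposition \ref{cofi-commute} applied to $\Omega = \{2^n : n \in \nn\}$ and $\Omega = \{3^n : n \in \nn\}$, we have natural isomorphisms
\begin{align*}
 \cofi{i}\!\left(\loct{V}\right) \cong \loct{\cofi{i}(V)} \quad \text{and} \quad \cofi{i}\!\left(\locr{V}\right) \cong \locr{\cofi{i}(V)} \, .
\end{align*}
Applying Corollary \ref{loc-deg} to the $\FI$-module $W := \cofi{i}(V)$ then yields
\begin{align*}
 t_{i}(V) = \deg\!\left(\cofi{i}(V)\right) &= \max\!\left\{\deg\!\left(\loct{\cofi{i}(V)}\right),\, \deg\!\left(\locr{\cofi{i}(V)}\right)\right\} \\
 &= \max\!\left\{t_{i}\!\left(\loct{V}\right),\, t_{i}\!\left(\locr{V}\right)\right\} \, ,
\end{align*}
as desired.

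For part (2), simply substitute the equality from (1) into the definition of regularity and swap the order of the two maxima:
\begin{align*}
 \reg(V) &= \max\{t_{i}(V) - i : i \geq 1\} = \max\!\left\{\max\!\left\{t_{i}\!\left(\loct{V}\right),\, t_{i}\!\left(\locr{V}\right)\right\} - i : i \geq 1\right\} \\
 &= \max\!\left\{\reg\!\left(\loct{V}\right),\, \reg\!\left(\locr{V}\right)\right\} \, .
\end{align*}

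There is no real obstacle here; the content is entirely packaged in Proposition \ref{cofi-commute} (commutation of $\cofi{i}$ with localization) and Corollary \ref{loc-deg} (the degree of an $\FI$-module is detected by its localizations at $2$ and $3$). The only thing to be mildly careful about is that both suprema can legitimately take the values $-1$, $-2$, or $\infty$; but since all quantities involved live in totally ordered sets with maxima behaving correctly, there is nothing to verify separately.
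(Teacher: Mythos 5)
Your proof is correct and takes essentially the same route as the paper: apply Proposition \ref{cofi-commute} to commute $\cofi{i}$ past localization, then invoke Corollary \ref{loc-deg} with $W = \cofi{i}(V)$, and for part (2) swap the two maxima. The remark about extended values $-1$, $-2$, $\infty$ is a sensible precaution but, as you note, poses no real issue.
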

\begin{proof}
%
By taking $W = \cofi{i}(V)$ in part (2) of Corollary \ref{loc-deg} and Proposition \ref{cofi-commute}, 
\begin{align*}
 t_{i}(V) &= \deg (\cofi{i}(V)) 
 \\
 &= \max\!\left\{
 \deg\!\left(\loct{\cofi{i}(V)}\right),\,
 \deg\!\left(\locr{\cofi{i}(V)}\right) \,
 \right\} 
 \\
 &= \max\!\left\{
 \deg\!\left(\cofi{i}\!\left(\loct{V}\right)\right),\,
 \deg\!\left(\cofi{i}\!\left(\locr{V}\right)\right) \,
 \right\} 
 \\
 &= \max\!\left\{t_{i}\!\left(\loct{V}\right),\,
 t_{i}\!\left(\locr{V}\right)
 \right\} \, ,
\end{align*}
and (1) follows. Then (2) follows immediately from (1).
\end{proof}

\begin{prop} \label{locoh-commute}
 Given a commutative ring $\comm$ with a multiplicatively closed subset $\Omega \subseteq \comm$ and $V \colon \FI \rarr \lMod{\comm}$ presented in finite degrees, then $\Omega^{-1}V \colon \FI \rarr \lMod{\Omega^{-1}\comm}$ is also presented in finite degrees such that
\begin{align*}
 \locoh{j}(\Omega^{-1}V) \cong \Omega^{-1}\!\locoh{j}(V)
\end{align*}
for every $j \geq 0$.
\end{prop}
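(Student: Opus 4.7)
My plan is to first verify that $\Omega^{-1}V$ is presented in finite degrees, then establish the cohomology isomorphism by localizing the derived-category construction already deployed in the proof of Theorem \ref{nss-nu}. The finite-presentation half is short: Proposition \ref{cofi-commute} gives $\cofi{i}(\Omega^{-1}V) \cong \Omega^{-1}\cofi{i}(V)$, and since $W_{S} = 0$ forces $(\Omega^{-1}W)_{S} = 0$, we have $\deg(\Omega^{-1}W) \leq \deg(W)$ for any $W \colon \FI \rarr \lMod{\comm}$. Combining these, $t_{i}(\Omega^{-1}V) \leq t_{i}(V)$, so finiteness of $t_{0}$ and $t_{1}$ transfers to $\Omega^{-1}V$ automatically.

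For the isomorphism, I would invoke \cite[Theorem 2.10]{cmnr-range} to build a bounded cochain complex $F$ of $\cofi{0}$-acyclic $\FI$-modules generated in finite degrees and a chain map $\alpha\colon V \rarr F$ whose cone is $\FI$-exact above some $\FI$-degree $N$, then set $T := \cone(\alpha)_{\leq N}[1]$ so that \cite[Proposition 2.6]{nss-regularity} supplies the identification $\locoh{j}(V) \cong \co^{j}(T)$ for every $j$. Now apply the exact functor $\Omega^{-1}$: it commutes with cones, shifts, the truncation $(-)_{\leq N}$, and cohomology; Proposition \ref{cofi-commute} together with the degree bound guarantees that $\Omega^{-1}F$ is still a bounded complex of $\cofi{0}$-acyclic $\FI$-modules over $\Omega^{-1}\comm$ generated in finite degrees, and the exactness of $\cone(\alpha)$ beyond $\FI$-degree $N$ survives localization. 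Hence the triangle $\Omega^{-1}T \rarr \Omega^{-1}V \rarr \Omega^{-1}F$ satisfies the structural hypotheses of \cite[Proposition 2.6]{nss-regularity} over $\Omega^{-1}\comm$, and applying that proposition to $\Omega^{-1}V$ yields
\[
\locoh{j}(\Omega^{-1}V) \cong \co^{j}(\Omega^{-1}T) \cong \Omega^{-1}\co^{j}(T) \cong \Omega^{-1}\locoh{j}(V).
\]

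The main obstacle is the bookkeeping in this second step: checking that every structural condition needed to reinvoke \cite[Proposition 2.6]{nss-regularity} over $\Omega^{-1}\comm$ actually survives localization, namely $\cofi{0}$-acyclicity and finite generation of the terms of $F$ and the eventual exactness of $\cone(\alpha)$. Each is routine from Proposition \ref{cofi-commute} and the exactness and degreewise nature of $\Omega^{-1}$, but this is the only part of the argument that is not completely immediate, and it is what forces the finite-presentation hypothesis on $V$.
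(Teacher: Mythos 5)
Your proposal is correct and takes essentially the same approach as the paper: both arguments use \cite[Theorem 2.10]{cmnr-range} to produce a bounded complex of $\cofi{0}$-acyclic $\FI$-modules resolving $V$, observe via Proposition \ref{cofi-commute} that localization preserves $\cofi{0}$-acyclicity and finite generation of each term as well as the eventual $\FI$-exactness, and then conclude the local cohomology isomorphism from exactness of $\Omega^{-1}$. The only cosmetic difference is that you route through the cone/truncation construction $T := \cone(\alpha)_{\leq N}[1]$ and \cite[Proposition 2.6]{nss-regularity} as in the proof of Theorem \ref{nss-nu}, whereas the paper applies \cite[Theorem 2.10]{cmnr-range} directly to the augmented complex $I^{\star}$ with $I^{0}=V$, sidestepping any mention of cones or triangles; the two packagings are equivalent.
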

\begin{proof}
 By \cite[Theorem 2.10]{cmnr-range}, there exists a cochain complex 
\begin{align*}
 I^{\star} : 0 \rarr I^{0} \rarr I^{1} \rarr \cdots \rarr I^{M} \rarr 0
\end{align*}
in $[\FI,\lMod{\comm}]$ such that $I^{\star}$ is exact in all high enough degrees, $I^{0} = V$, for $j \geq 1$ $I^{j}$ is $\cofi{0}$-acyclic and presented in finite degrees. Now the cochain complex $\Omega^{-1}I^{\star}$ in $[\FI,\lMod{\Omega^{-1}\comm}]$ satisfies 
\begin{itemize}
 \item $(\Omega^{-1}I)^{0} = \Omega^{-1}I^{0} = \Omega^{-1}V$,
 \item $\Omega^{-1}I^{\star}$ is exact in all high enough degrees,
 \item for every $j \geq 0$, $(\Omega^{-1}I)^{j} = \Omega^{-1}I^{j}$ is presented in finite degrees by Theorem \ref{cofi-commute}, 
 \item for every $j \geq 1$, $\Omega^{-1}I^{j}$ is $\cofi{0}$-acyclic by Theorem \ref{cofi-commute} (hence is \emph{semi-induced} in the sense of $\cite{cmnr-range}$, see \cite[Theorem 2.4]{bahran-reg}),
\end{itemize}
Therefore again by \cite[Theorem 2.10]{cmnr-range} and the exactness of $\Omega^{-1}$, for every $j \geq 0$ we have 
\begin{align*}
 \locoh{j}(\Omega^{-1}V) \cong \co^{j}(\Omega^{-1}I^{\star}) \cong \Omega^{-1}\!\co^{j}(I^{\star}) \cong \Omega^{-1}\!\locoh{j}(V) \, .
\end{align*}
\end{proof}

\begin{cor} \label{loc-locoh}
 Given a commutative ring $\comm$ and $V \colon \FI \rarr \lMod{\comm}$ presented in finite degrees, then the following hold:
\begin{birki}
 \item $\locl{V} \colon \FI \rarr \lMod{\locl{\comm}}$ is presented in finite degrees for $\ell \in \{2,3\}$.
\vspace{0.1cm}
\item For every $j \geq 0$, we have $
 h^{j}(V) = \max\!\left\{
 h^{j}\!\left(\loct{V}\right),\,
 h^{j}\!\left(\locr{V}\right)
 \right\}$.
\vspace{0.1cm}
\item If $V$ is not $\cofi{0}$-acyclic, setting $\gamma := \crit(V)$, there exists $\ell \in \{2,3\}$ such that $\locl{V}$ is not $\cofi{0}$-acyclic, $h^{\gamma}(\locl{V}) = h^{\gamma}(V)$, $\reg(\locl{V}) = \reg(V)$, and $\crit(\locl{V}) = \gamma$.
\end{birki}
 
\end{cor}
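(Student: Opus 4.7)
The plan is to dispatch parts (1)--(2) almost directly from the preceding results, and to handle part (3) via a short minimality argument.

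For part (1), I would invoke Corollary~\ref{loc-cofi}(1) at $i = 0, 1$: since $t_i(V)$ equals the maximum of $t_i(\loct{V})$ and $t_i(\locr{V})$, the finiteness of $\tgen(V)$ and $\trel(V)$ transfers to each localization. For part (2), I would apply Corollary~\ref{loc-deg} to the $\FI$-module $W = \locoh{j}(V)$ and then use Proposition~\ref{locoh-commute} to rewrite each factor on the right as $h^j$ of a localization.

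For part (3), the key move is to apply part~(2) at $j = \gamma$: since $h^\gamma(V) \geq 0$ by the definition of the critical index, there exists $\ell \in \{2,3\}$ with $h^\gamma(\locl{V}) = h^\gamma(V)$, and I claim this $\ell$ satisfies all four conditions. Non-$\cofi{0}$-acyclicity of $\locl{V}$ holds because $\locoh{\gamma}(\locl{V}) \neq 0$, whereas a $\cofi{0}$-acyclic $\FI$-module is semi-induced (by \cite[Theorem~2.4]{bahran-reg}) and hence has vanishing local cohomology in all degrees. With part~(1) also providing that $\locl{V}$ is presented in finite degrees, Theorem~\ref{nss-main} applied to $\locl{V}$ yields $\reg(\locl{V}) \geq h^\gamma(\locl{V}) + \gamma = \reg(V)$, while the reverse inequality is Corollary~\ref{loc-cofi}(2); so $\reg(\locl{V}) = \reg(V)$. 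Finally, $\gamma$ is clearly a candidate for $\crit(\locl{V})$, and any strictly smaller candidate $j < \gamma$ would, via part~(2) giving $h^j(V) \geq h^j(\locl{V}) \geq 0$ combined with the sharp upper bound $h^j(V) + j \leq \reg(V)$ from Theorem~\ref{nss-main}, force $h^j(V) + j = \reg(V)$---contradicting the minimality of $\gamma = \crit(V)$.

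The main conceptual subtlety I anticipate is ensuring that $\cofi{0}$-acyclicity precludes any nonzero local cohomology, a fact not contained in the definitions but immediate from the semi-induced characterization. The remainder is formal max/min bookkeeping orchestrated by parts (1)--(2) and the sharp regularity identity of Theorem~\ref{nss-main}.
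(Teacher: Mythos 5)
Your proposal is correct and follows essentially the same route as the paper: parts (1)–(2) from Corollary~\ref{loc-cofi} and Corollary~\ref{loc-deg}\,+\,Proposition~\ref{locoh-commute} applied to $W = \locoh{j}(V)$, and part (3) via choosing $\ell$ so that $h^\gamma(\locl{V}) = h^\gamma(V) \geq 0$, deducing non-$\cofi{0}$-acyclicity from the semi-induced characterization, sandwiching $\reg(\locl{V})$ between $h^\gamma(\locl{V}) + \gamma$ and $\reg(V)$, and killing any smaller critical-index candidate by the minimality of $\crit(V)$. (Incidentally, your substitution $W = \locoh{j}(V)$ in part (2) is what the paper intends; the written text there has a slip reading $W = \cofi{i}(V)$.)
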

\begin{proof}
(1) follows from Corollary \ref{loc-cofi}. Taking $W = \cofi{i}(V)$ in Corollary \ref{loc-deg} and Proposition \ref{locoh-commute}, 
\begin{align*}
 h^{j}(V) &= \deg (\locoh{j}(V)) 
 \\
 &= \max\!\left\{
 \deg\!\left(\loct{\locoh{j}(V)}\right),\,
 \deg\!\left(\locr{\locoh{j}(V)}\right) \,
 \right\} 
 \\
 &= \max\!\left\{
 \deg\!\left(\locoh{j}\!\left(\loct{V}\right)\right),\,
 \deg\!\left(\locoh{j}\!\left(\locr{V}\right)\right) \,
 \right\} 
 \\
 &= \max\!\left\{h^{j}\!\left(\loct{V}\right),\,
 h^{j}\!\left(\locr{V}\right)
 \right\} \, ,
\end{align*}
establishing (2). For (3), let us also write 
\begin{align*}
 \rho := \reg(V)\,, \quad
 \gamma[2] := \crit\!\left(\loct{V}\right)\,, \quad
 \gamma[3] := \crit\!\left(\locr{V}\right)\,.
\end{align*}
By part (2), and there is $\ell \in \{2,3\}$ such that $h^{\gamma}(\locl{V}) = h^{\gamma}(V) \geq 0$ and hence by \cite[Theorem 2.4]{bahran-reg}, $\locl{V}$ is not $\cofi{0}$-acyclic. By part (1) we know $\locl{V}$ is presented in finite degrees, so Theorem \ref{nss-main} and part (2) of Corollary \ref{loc-cofi} yield
\begin{align*}
 0 \leq \rho &= h^{\gamma}(V) + \gamma 
 = h^{\gamma}\!\left(\locl{V}\right) + \gamma \leq \reg\!\left(
 \locl{V}
 \right) \leq \rho \, .
\end{align*}
Therefore $h^{\gamma}\!\left(\locl{V}\right) + \gamma = \reg\!\left(
 \locl{V}\right) = \rho$ and hence $\gamma[\ell] \leq \gamma$ by the definition of $\crit(V[\ell]) = \gamma[\ell]$. If we had $\gamma[\ell] < \gamma$, the definition of $\crit(V) = \gamma$ and part (2) would give 
\begin{align*}
 \reg\!\left(
 \locl{V}\right) =
 \rho > h^{\gamma[\ell]}(V) + \gamma[\ell] \geq h^{\gamma[\ell]}\!\left(\locl{V}\right) + \gamma[\ell] = \reg\!\left(
 \locl{V}\right) \, ,
\end{align*}
a contradiction. Thus $\gamma[\ell] = \gamma$.
\end{proof}

\begin{proof}[Proof of \textbf{\emph{Theorem \ref{main}}}] Let us write $\rho := \reg(V)$ and $\gamma := \crit(V)$ as usual. If $2 \in \comm^{\times}$ is invertible, then $\comm\sym{2}$ has a good ideal by \cite[Proposition 3.1]{nss-regularity} and we can apply Corollary \ref{good-ideal-range}. 

For a general $\comm$, pick $\ell \in \{2,3\}$ as in part (3) of Corollary \ref{loc-locoh}, so that 
\begin{itemize}
 \item $h^{\gamma}(\locl{V}) = h^{\gamma}(V) \geq 0$,
 \vspace{0.1cm}
 \item $\reg(\locl{V}) = \rho$,
 \vspace{0.1cm}
 \item $\crit(\locl{V}) = \gamma$.
\end{itemize}
Now $\locl{\comm}\!\sym{\ell}$ has a good ideal by \cite[Proposition 3.1]{nss-regularity} if $\ell =2$, and by \cite[Proposition 3.2]{nss-regularity} if $\ell =3$. Now by part (1) of Corollary \ref{loc-cofi} and Corollary \ref{good-ideal-range}, we have 
\begin{align*}
 t_{a}(V) - a \geq t_{a}\!\left(\locl{V}\right) - a = \reg\!\left(\locl{V}\right) = \rho
\end{align*}
whenever 
\begin{align*}
 a &\geq \max\!\left\{1,(\ell-1)h^{\gamma}\!\left(\locl{V}\right) - \gamma\right\} = 
 \max\!\left\{1,(\ell-1)h^{\gamma}(V) - \gamma\right\}
\end{align*}
and hence whenever $a \geq \max\!\left\{1,2h^{\gamma}\!\left(V\right) - \gamma\right\}$ as $\ell \in \{2,3\}$.
Thus by the weakly increasing property in Theorem \ref{nss-main}, $t_{a}(V) - a = \rho$ in the same range. 
\end{proof}

\bibliographystyle{hamsalpha}
\bibliography{stable-boy}

\providecommand{\bysame}{\leavevmode\hbox to3em{\hrulefill}\thinspace}
\providecommand{\MR}{\relax\ifhmode\unskip\space\fi MR }
\providecommand{\MRhref}[2]{%
  \href{http://www.ams.org/mathscinet-getitem?mr=#1}{#2}
}
\providecommand{\href}[2]{#2}
\begin{thebibliography}{CMNR18}

\bibitem[Bah22a]{bahran-polynomial}
Cihan Bahran, \emph{{Polynomial conditions and homology of
  $\text{\textbf{\emph{FI}}}$-modules}}, 2022, \mbox{arXiv:2209.10496v5 (to
  appear in Pacific Journal of Mathematics)}.

\bibitem[Bah22b]{bahran-reg}
\bysame, \emph{{Regularity and stable ranges of
  $\text{\textbf{\emph{FI}}}$-modules}}, 2022, \mbox{arXiv:2203.06698v9}.

\bibitem[CE17]{ce-homology}
Thomas Church and Jordan~S. Ellenberg, \emph{{Homology of
  {\textbf{\emph{FI}}}-modules}}, Geometry \& Topology \textbf{21} (2017),
  no.~4, 2373--2418. \MR{3654111}

\bibitem[CMNR18]{cmnr-range}
Thomas Church, Jeremy Miller, Rohit Nagpal, and Jens Reinhold, \emph{Linear and
  quadratic ranges in representation stability}, Advances in Mathematics
  \textbf{333} (2018), 1--40. \MR{3818071}

\bibitem[EG84]{eisenbud-goto}
David Eisenbud and Shiro Goto, \emph{Linear free resolutions and minimal
  multiplicity}, Journal of Algebra \textbf{88} (1984), 89--133.

\bibitem[{Gan}16]{gan-shift-seq}
Wee~Liang {Gan}, \emph{{A long exact sequence for homology of
  \textbf{\emph{FI}}-modules}}, {The New York Journal of Mathematics}
  \textbf{22} (2016), 1487--1502.

\bibitem[NSS18]{nss-regularity}
Rohit {Nagpal}, Steven~V. {Sam}, and Andrew {Snowden}, \emph{{Regularity of
  \(\mathbf {FI}\)-modules and local cohomology}}, {Proceedings of the American
  Mathematical Society} \textbf{146} (2018), no.~10, 4117--4126.

\bibitem[Wei94]{weibel-hom-alg}
Charles~A. Weibel, \emph{An introduction to homological algebra}, Cambridge
  Studies in Advanced Mathematics, vol.~38, Cambridge University Press,
  Cambridge, 1994. \MR{1269324}

\end{thebibliography}

\end{document}